\newtheorem{theorem}{Theorem}[section]
\newtheorem{lemma}[theorem]{Lemma}
\newtheorem{proposition}[theorem]{Proposition}
\newtheorem{corollary}[theorem]{Corollary}
\theoremstyle{definition}
\newtheorem{definition}[theorem]{Definition}
\newtheorem{example}[theorem]{Example}
\theoremstyle{remark}
\newtheorem{remark}[theorem]{Remark}
\journal{Linear Algebra and its Applications}
\begin{document}

\begin{frontmatter}

\title{Generalized eigenvalue problems for meet and join matrices on semilattices}

\author{Pauliina Ilmonen}
\ead{pauliina.ilmonen@aalto.fi}
\author{Vesa Kaarnioja\corref{cor1}\fnref{support}}
\ead{vesa.kaarnioja@aalto.fi}
\cortext[cor1]{Corresponding author}
\address{Aalto University School of Science, Department of Mathematics and Systems Analysis, P.O. Box 11100, FI-00076 Aalto, Finland}
\fntext[support]{The author has been supported by the Academy of Finland (decision 267789).}




\begin{abstract}
We study generalized eigenvalue problems for meet and join matrices with respect to incidence functions on semilattices. We provide new bounds for generalized eigenvalues of meet matrices with respect to join matrices under very general assumptions. The applied methodology is flexible, and it is shown in the case of GCD and LCM matrices that even sharper bounds can be obtained by applying the known properties of the divisor lattice. These results can also be easily modified for the dual problem of eigenvalues of join matrices with respect to meet matrices, which we briefly consider as well. We investigate the effectiveness of the obtained bounds for select examples involving number-theoretical lattices.
\end{abstract}

\begin{keyword}
Meet semilattice\sep meet matrix\sep generalized eigenvalue\sep M\"{o}bius function\sep GCD matrix
\MSC[2010] 06A12\sep 15A18\sep 15B36\sep 11C20
\end{keyword}

\end{frontmatter}


\section{Introduction}
The mathematical literature concerning the properties of GCD and related matrices can be traced back to Smith~\cite{Smith}, who in 1876 studied the determinant of the $n\times n$ matrix having the greatest common divisor of $i$ and $j$ as its $ij$ entry. This matrix is affectionately called the Smith matrix by later authors. The generalizations of classical GCD and LCM matrices are known as meet and join matrices, respectively, and they arise in lattice theory, where they are used to represent the interrelations between distinct lattice elements. The study of eigenvalues and determinant of meet and join matrices relies heavily on factorizations of these matrices. Factorizations can be constructed by separating the partial order relation endowed on the lattice from the contribution of the incidence function on the lattice elements.

The eigenvalues and determinant of GCD and LCM matrices have been a source of interest ever since the pioneering work of Smith. Wintner~\cite{Wi}, and Lindqvist and Seip~\cite{Li} considered the eigenvalues of the $n\times n$ matrix having
\[
\frac{{\rm gcd}(i,j)^\alpha}{{\rm lcm}(i,j)^\alpha}
\]
as its $ij$ entry. Beslin and Ligh~\cite{BeLi} proved that GCD matrices are positive definite and this result was later shown by Bourque and Ligh~\cite{BoLI} to hold for all power GCD matrices for any $\alpha>0$. Ovall~\cite{Ov} continued this mode of research by considering the positive definiteness of matrices related to GCD matrices. Balatoni~\cite{Fe} estimated the smallest and largest eigenvalue of the Smith matrix. Hong and Loewy~\cite{AR} investigated the asymptotic behavior of eigenvalues of power GCD matrices. Bhatia~\cite{Bha} considered GCD matrices as an example of infinitely divisible matrices.

Ilmonen, Haukkanen, and Merikoski~\cite{i1} instigated the study eigenvalues of meet and join matrices associated with incidence functions. The eigenvalues of meet and join matrices have since been studied thoroughly in recent literature: Mattila and Haukkanen~\cite{Mattila4} studied positive definiteness and eigenvalues of meet and join matrices, and Mattila, Haukkanen, and M\"{a}ntysalo~\cite{Mattila5} considered the singularity of LCM-type matrices.

Generalized eigenvalue problems associated with meet and join matrices have not been studied in the literature. In this work, we conduct a first study of generalized eigenvalues of meet and join matrices with respect to incidence functions on semilattices and we provide new bounds for both minimal and dominant eigenvalues. Our results provide a general framework within which it is possible to produce bounds for generalized eigenvalues of very complicated lattice-theoretical constructions, and we are confident that the methodology used in this paper can be successfully employed to other problems of interest.

This paper is organized as follows. In Section~\ref{notations}, we review the essential notations surrounding meet and join matrices and incidence functions defined on semilattices. The preliminary decomposition theory governing meet and join matrices is also reviewed under this section. In Section~\ref{gcdlcmsec}, we introduce the generalized eigenvalue problem for meet matrices with respect to join matrices and proceed to prove new bounds for the dominant and minimal eigenvalues. We provide global bounds, which describe the behavior of the generalized spectrum of meet matrices with respect to join matrices. Moreover, we also consider so-called local bounds that employ the corresponding lattice structure in more detail. When the structure of the lattice is known --- as is the case for the divisor lattice, for example --- this produces improved bounds. We also briefly present the corresponding results for the dual eigenvalue problem in Section~\ref{lcmgcdsec}. We investigate the obtained bounds for select examples in Section~\ref{experiments} and end with some conclusions and future prospects in Section~\ref{conclusions}.

\section{Notations and preliminaries}\label{notations}
\subsection{Meet and join matrices}\label{meetandjoin}
Let $(P,\preceq)$ be a nonempty poset and $f$ a complex-valued function on $P$. The poset is called \emph{locally finite} if the set
\[
\{z\in P\mid x\preceq z\preceq y\}
\]
is finite for all $x,y\in P$. If the greatest lower bound of $x,y\in P$ exists, it is called the \emph{meet} of $x$ and $y$  and is denoted by $x\wedge y$. If the least upper bound of $x,y\in P$ exists, it is called the \emph{join} of $x$ and $y$ and is denoted by $x\vee y$. If $x\wedge y\in P$ exists for all $x,y\in P$, then $(P,\preceq,\wedge)$ is called a \emph{meet semilattice}, and if $x\vee y\in P$ exists for all $x,y\in P$, then $(P,\preceq,\vee)$ is called a \emph{join semilattice}. If the poset $(P,\preceq,\wedge,\vee)$ is both a meet semilattice and a join semilattice, then it is called a \emph{lattice}.

Let $(P,\preceq)$ be a poset and suppose that $S=\{x_1,x_2,\ldots,x_n\}$ is a finite subset of $P$ such that $x_i\preceq x_j$ only if $i\leq j$. If $(P,\preceq,\wedge)$ is a meet semilattice, then the $n\times n$ matrix $(S)_f$ defined by setting
\[
((S)_f)_{i,j}=f(x_i\wedge x_j),\quad i,j\in\{1,2,\ldots,n\},
\]
is called the \emph{meet matrix} on $S$ with respect to $f$. If $(P,\preceq,\vee)$ is a join semilattice, then the $n\times n$ matrix $[S]_f$ defined by setting
\[
([S]_f)_{i,j}=f(x_i\vee x_j),\quad i,j\in\{1,2,\ldots,n\},
\]
is called the \emph{join matrix} on $S$ with respect to $f$.

A complex-valued function $f$ on $P\times P$ such that $f(x,y)=0$, whenever $x\not\preceq y$, is called an \emph{incidence function} of $P$. If $f$ and $g$ are incidence functions of $P$, then their sum $f+g$ is defined by
\[
(f+g)(x,y)=f(x,y)+g(x,y)\quad\text{for all }x,y\in P,
\]
their product $fg$ is given by
\[
(fg)(x,y)=f(x,y)g(x,y)\quad\text{for all }x,y\in P,
\]
and their convolution is defined by
\[
(f*g)(x,y)=\sum_{x\preceq z\preceq y}f(x,z)g(z,y)\quad\text{for all }x,y\in P.
\]
The sum, product, and convolution of incidence functions of $P$ are incidence functions of $P$ as well.

The incidence function $\delta$ defined by
\[
\delta(x,y)=\begin{cases}1&\text{if }x=y,\\ 0&\text{otherwise}\end{cases}
\]
is unity under the convolution. The incidence function $\zeta$ of $P$ is defined by
\[
\zeta(x,y)=\begin{cases}1&\text{if }x\preceq y,\\ 0&\text{otherwise.}\end{cases}
\]
The inverse of $\zeta$ under the convolution is the M\"{o}bius function $\mu$ on $P$ defined inductively by
\begin{align*}
&\mu(x,x)=1,\\
&\mu(x,y)=-\sum_{x\preceq z\prec y}\mu(x,z)\quad \text{if }x\prec y
\end{align*}
for all $x,y\in P$. 
\subsection{The divisor lattice}
An important special case of the general framework of Subsection~\ref{meetandjoin} is the divisor lattice $(P,\preceq)=(\mathbb{Z_+},|)$, where $|$ denotes the ordinary divisibility relation of positive integers. The greatest lower bound of $x,y\in\mathbb{Z_+}$ is their \emph{greatest common divisor} (gcd)
\[
x\wedge y=(x,y)
\]
and their least upper bound is their \emph{least common multiple} (lcm)
\[
x\vee y=[x,y].
\]
The quadruplet $(\mathbb{Z_+},|,{\rm gcd},{\rm lcm})$ is a locally finite lattice possessing the smallest element $1\in\mathbb{Z_+}$.

A complex-valued function $f$ on $\mathbb{Z_+}$ is called an arithmetical function. The Dirichlet convolution of arithmetical functions $f$ and $g$ is defined by
\[
(f*_Dg)(x)=\sum_{y|x}f(y)g\bigg(\frac{x}{y}\bigg),\quad x\in\mathbb{Z_+}.
\]
The arithmetical function $\delta$ defined by
\[
\delta(x)=\begin{cases}1&\text{if }x=1,\\ 0&\text{otherwise}.\end{cases}
\]
is unity under the Dirichlet convolution. The function $\zeta$ is defined by
\[
\zeta(x)=1\quad\text{for all }x\in\mathbb{Z_+}
\]
and the inverse of $\zeta$ under the Dirichlet convolution is the arithmetical M\"{o}bius function $\mu$ given by
\[
\mu(x)=\begin{cases}1&\text{if }x=1,\\ (-1)^n&\text{if $x$ is the product of $n$ distinct prime numbers},\\ 0&\text{otherwise.}\end{cases}
\]

The respective incidence functions $\delta$, $\zeta$, and $\mu$ of the lattice $(\mathbb{Z_+},|)$ are obtained by identifying
\[
\delta(x,y)=\delta\bigg(\frac{y}{x}\bigg),\ \zeta(x,y)=\zeta\bigg(\frac{y}{x}\bigg),\ \text{and}\ \mu(x,y)=\mu\bigg(\frac{y}{x}\bigg)\quad\text{for }x|y,\ y\in\mathbb{Z_+},
\]
which justifies the dual use of notations. In the sequel, the intended meaning of the functions $\delta$, $\zeta$, and $\mu$ is clear from context. For further material on arithmetical functions and incidence algebras, we refer the interested reader to~\cite{Ai,Mc,St}.

\subsection{Structure theory of meet and join matrices}
The properties of meet matrices have been researched extensively by many authors and they are known to possess a variety of desirable qualities under very general assumptions. In this section, we state some of the known properties of meet matrices on locally finite meet semilattices.

Let $(P,\preceq,\wedge,\hat 0)$ be a locally finite meet semilattice with the least element $\hat 0$, i.e., $\hat 0\preceq x$ for all $x\in P$, and suppose that $S=\{x_1,x_2,\ldots,x_n\}$ is a finite subset of $P$ such that $x_i\preceq x_j$ only if $i\leq j$. The set $S$ is said to be \emph{lower closed} if $y\in S$ whenever $x\in S$, $y\in P$ with $y\preceq x$, and $S$ is said to be \emph{meet closed} if $x\wedge y\in S$ for all $x,y\in S$. Let $f$ be a complex-valued function on $P$. We associate $f$ with a restricted incidence function $f_d$ of $(P,\preceq,\wedge,\hat 0)$ defined by the formula
\[
f_d(\hat 0,z)=f(z),\quad z\in P.
\]

The following factorization exists for meet matrices on meet closed sets.
\begin{proposition}[cf.~{\cite[Theorem~12]{Ra}}]\label{EDET}
Let $S$ be meet closed and define the $n\times n$ matrices $E$ and $D={\rm diag}(d_1,\ldots,d_n)$ by setting
\begin{align}
&E_{i,j}=\begin{cases}1&\text{if }x_j\preceq x_i,\\ 0&\text{otherwise},\end{cases}\quad i,j\in\{1,2,\ldots,n\},\label{Emat}\\
&d_i=\sum_{\substack{z\preceq x_i\\ z\not\preceq x_j,\ j<i}}(f_d*\mu)(\hat 0,z),\quad i\in\{1,2,\ldots,n\}.\label{Dmat}
\end{align}
Then $(S)_f=EDE^{\textup{T}}$.
\end{proposition}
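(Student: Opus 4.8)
The plan is to verify the identity $(S)_f = EDE^{\mathrm T}$ entrywise. Computing the $(i,j)$ entry of $EDE^{\mathrm T}$ gives
\[
(EDE^{\mathrm T})_{i,j}=\sum_{k=1}^n E_{i,k}\,d_k\,E_{j,k}=\sum_{\substack{k:\ x_k\preceq x_i\\ x_k\preceq x_j}}d_k=\sum_{x_k\preceq x_i\wedge x_j}d_k,
\]
where the last step uses that $x_k\preceq x_i$ and $x_k\preceq x_j$ together are equivalent to $x_k\preceq x_i\wedge x_j$, and crucially that $x_i\wedge x_j\in S$ because $S$ is meet closed, so this meet is one of the $x_k$. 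Thus the claim reduces to showing
\[
f(x_i\wedge x_j)=\sum_{x_k\preceq x_i\wedge x_j}d_k
\]
for all $i,j$, and since every element $x_m\in S$ arises as some such meet (take $i=j=m$), it suffices to prove $f(x_m)=\sum_{x_k\preceq x_m}d_k$ for each $m$.

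The next step is to unwind the definition of $d_k$. Substituting \eqref{Dmat} and exchanging the order of summation,
\[
\sum_{x_k\preceq x_m}d_k=\sum_{x_k\preceq x_m}\ \sum_{\substack{z\preceq x_k\\ z\not\preceq x_j,\ j<k}}(f_d*\mu)(\hat 0,z)=\sum_{z\preceq x_m}(f_d*\mu)(\hat 0,z)\cdot\#\{k:\ x_k\preceq x_m,\ z\preceq x_k,\ z\not\preceq x_j\text{ for }j<k\}.
\]
The key combinatorial fact to establish is that for each fixed $z\preceq x_m$, there is exactly one index $k$ with $x_k\preceq x_m$, $z\preceq x_k$, and $z\not\preceq x_j$ for all $j<k$: namely, among all $x_\ell\in S$ with $z\preceq x_\ell$, let $k$ be the smallest such index; then $z\preceq x_k$, and for $j<k$ we have $z\not\preceq x_j$ by minimality, while the requirement $x_k\preceq x_m$ holds because $x_k=\bigwedge\{x_\ell\in S: z\preceq x_\ell\}$ (this meet lies in $S$ by meet-closedness, equals $x_k$ since $k$ is minimal, and is $\preceq x_m$ as $z\preceq x_m$). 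Conversely any such $k$ must be this minimal index. Hence the multiplicity collapses to $1$ and
\[
\sum_{x_k\preceq x_m}d_k=\sum_{z\preceq x_m}(f_d*\mu)(\hat 0,z).
\]

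Finally, I would apply Möbius inversion: by definition of convolution and the relation $f_d=f_d*\delta=f_d*(\zeta*\mu)=(f_d*\zeta)*\mu$ restricted appropriately, one has
\[
\sum_{z\preceq x_m}(f_d*\mu)(\hat 0,z)=\sum_{\hat 0\preceq z\preceq x_m}(f_d*\mu)(\hat 0,z)=(f_d*\mu*\zeta)(\hat 0,x_m)=f_d(\hat 0,x_m)=f(x_m),
\]
using that $\mu*\zeta=\delta$ and local finiteness of the interval $[\hat 0,x_m]$. Chaining the three displays yields $f(x_m)=\sum_{x_k\preceq x_m}d_k$, and therefore $(S)_f=EDE^{\mathrm T}$.

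I expect the main obstacle to be the combinatorial "exactly one $k$" argument in the second step: one must carefully use meet-closedness of $S$ (not merely lower-closedness) to identify the minimal-index element $x_k$ with the meet $\bigwedge\{x_\ell\in S: z\preceq x_\ell\}$ and to confirm $x_k\preceq x_m$. The Möbius-inversion step is then routine given local finiteness and the standard incidence-algebra identities recalled in Section~\ref{notations}.
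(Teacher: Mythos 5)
The paper states Proposition~\ref{EDET} without proof, citing it from the literature, so there is no in-paper argument to compare against; your proof is correct and is essentially the standard one for this factorization. In particular, you correctly isolate the two essential points: that meet-closedness (together with the ordering convention $x_i\preceq x_j\Rightarrow i\leq j$) forces, for each $z\preceq x_m$, exactly one index $k$ with $z\preceq x_k$, $z\not\preceq x_j$ for $j<k$, and $x_k\preceq x_m$ (namely the index of $\bigwedge\{x_\ell : z\preceq x_\ell\}$), and that the remaining sum telescopes to $f(x_m)$ via $\mu*\zeta=\delta$ on the finite interval $[\hat 0,x_m]$.
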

\begin{remark}\label{pdremark}
It is evident from Proposition~\ref{EDET} that $(S)_f$ is positive semidefinite for meet closed $S$ if and only if
\[
d_i=\sum_{\substack{z\preceq x_i\\ z\not\!\preceq x_j,\ j<i}}(f_d*\mu)(\hat 0,z)\geq 0\quad \text{for all }i\in\{1,2,\ldots,n\}
\]
and $(S)_f$ is positive definite if and only if $d_i>0$ for all $i\in\{1,2,\ldots,n\}$.
\end{remark}
On lower closed sets, the outer sum in the expression~\eqref{Dmat} vanishes.
\begin{proposition}[cf.~{\cite[Example~1]{Ha}}]
Let $S$ be lower closed. Then
\[
f(x_i)=\sum_{\substack{z\preceq x_i\\ z\not\preceq x_j,\ j<i}}f(z),\quad x_i\in S.
\]
\end{proposition}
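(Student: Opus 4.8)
The claim is purely combinatorial, so the plan is to show that, under the two structural hypotheses imposed on $S$ --- the compatible labeling $x_a\preceq x_b\Rightarrow a\le b$ and lower-closedness --- the set of elements $z$ over which the right-hand sum runs collapses to the single element $x_i$.

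First I would note that, since $S$ is lower closed and $x_i\in S$, every $z\in P$ with $z\preceq x_i$ already lies in $S$; hence each such $z$ equals $x_k$ for some $k\in\{1,\dots,n\}$, and the labeling hypothesis gives $k\le i$. Thus the summation range is contained in $\{x_1,\dots,x_i\}$, and it remains to determine which of these also satisfy $z\not\preceq x_j$ for all $j<i$. The element $x_i$ does: $x_i\preceq x_i$ trivially, and $x_i\preceq x_j$ for some $j<i$ would force $i\le j$ by the labeling hypothesis, which is impossible. Conversely, if $z=x_k\preceq x_i$ with $k<i$, then taking $j=k$ gives $z=x_k\preceq x_k=x_j$ with $j<i$, so this $z$ is excluded. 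Hence $z=x_i$ is the unique admissible term, and the sum equals $f(x_i)$, as asserted.

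I do not expect any genuine obstacle here; the only care needed is to invoke the right hypothesis at the right moment --- lower-closedness to confine the index set to $S$, and the compatible labeling to reduce it to $\{x_i\}$. As a byproduct, applying the identity to the function $z\mapsto(f_d*\mu)(\hat 0,z)$ shows that the outer sum in~\eqref{Dmat} indeed disappears for lower closed $S$, leaving $d_i=(f_d*\mu)(\hat 0,x_i)$.
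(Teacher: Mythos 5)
Your proof is correct. The paper itself states this proposition without proof (deferring to the cited reference), and your argument is exactly the intended one: lower-closedness confines every $z\preceq x_i$ to $S$, the compatible labeling forces $z=x_k$ with $k\le i$, and the exclusion condition $z\not\preceq x_j$ for $j<i$ (applied with $j=k$) eliminates every $k<i$, so the summation range is the singleton $\{x_i\}$ and the identity is immediate; your closing observation that the same collapse applied to $z\mapsto(f_d*\mu)(\hat 0,z)$ yields $d_i=(f_d*\mu)(\hat 0,x_i)$ is precisely how the proposition is used in the paper.
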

For example, in the lattice $(P,\preceq)=(\mathbb{Z_+},|)$, the restricted incidence function $f(x)=f_d(\hat 0,x)=x^\alpha$, $\alpha\in\mathbb{R}$, yields $(f_d*\mu)(x)=J_{\alpha}(x)$, where $J_\alpha$ denotes Jordan's totient function. In the special case $\alpha=1$, this convolution is equal to Euler's totient function $J_1(x)=\phi(x)$.

It has been shown by Korkee and Haukkanen~\cite{Is2} that join matrices inherit the structure theory governing meet matrices under the following condition imposed on the restricted incidence function.
\begin{definition}\label{semimultipl}
Let $(P,\preceq,\wedge,\vee)$ be a poset and $f$ a complex-valued function defined on $P$. The function $f$ is \emph{semimultiplicative} if
\[
f(x\wedge y)f(x\vee y)=f(x)f(y)\quad\text{for all }x,y\in P.
\]
\end{definition}
Join matrices possess the following decomposition.
\begin{proposition}[cf.~{\cite[Lemma~5.1]{Is2}}]\label{RELETR}
Let $f$ be a semimultiplicative function on $P$ such that $f(x)\neq 0$ for all $x\in P$ and let $R={\rm diag}(f(x_1),\ldots,f(x_n))$. Then
\[
[S]_f=R(S)_{1/f}R.
\]
\end{proposition}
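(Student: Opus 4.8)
The plan is to establish the identity $[S]_f = R(S)_{1/f}R$ by a direct entrywise comparison. Since $R={\rm diag}(f(x_1),\ldots,f(x_n))$ is diagonal, for any $n\times n$ matrix $M$ the product $RMR$ has $(i,j)$ entry equal to $f(x_i)\,M_{i,j}\,f(x_j)$. I would apply this with $M=(S)_{1/f}$, whose $(i,j)$ entry is $(1/f)(x_i\wedge x_j)=1/f(x_i\wedge x_j)$; this is well defined precisely because the hypothesis $f(x)\neq 0$ for all $x\in P$ guarantees that $f$ does not vanish at the meet $x_i\wedge x_j\in P$. This gives
\[
\bigl(R(S)_{1/f}R\bigr)_{i,j}=\frac{f(x_i)\,f(x_j)}{f(x_i\wedge x_j)}.
\]

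Next I would invoke the semimultiplicativity of $f$ from Definition~\ref{semimultipl}: taking $x=x_i$ and $y=x_j$ yields $f(x_i\wedge x_j)\,f(x_i\vee x_j)=f(x_i)\,f(x_j)$. Dividing by $f(x_i\wedge x_j)\neq 0$ gives $f(x_i)\,f(x_j)/f(x_i\wedge x_j)=f(x_i\vee x_j)=([S]_f)_{i,j}$. Comparing with the previous display shows $(R(S)_{1/f}R)_{i,j}=([S]_f)_{i,j}$ for every $i,j\in\{1,\ldots,n\}$, which is the assertion.

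There is no real obstacle in this argument; its entire content is the pointwise identity $f(x)f(y)/f(x\wedge y)=f(x\vee y)$ furnished by semimultiplicativity, together with the remark that $1/f$ is a genuine complex-valued function on $P$ under the standing assumption $f\neq 0$, so that the meet matrix $(S)_{1/f}$ is meaningful. The only items deserving a moment of care are bookkeeping: that $P$ is implicitly assumed to be a lattice here, so that $x_i\wedge x_j$ and $x_i\vee x_j$ both exist and lie in $P$ and hence $(S)_{1/f}$ and $[S]_f$ are both defined; and that $R$ is invertible (its diagonal entries being nonzero), which allows the conclusion to be recast as $(S)_{1/f}=R^{-1}[S]_f R^{-1}$ should that form be preferred in later applications.
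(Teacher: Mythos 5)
Your argument is correct: the paper states this proposition as a cited result (from Korkee and Haukkanen) without reproducing a proof, and your direct entrywise verification --- combining $(R(S)_{1/f}R)_{i,j}=f(x_i)f(x_j)/f(x_i\wedge x_j)$ with the semimultiplicativity identity $f(x_i\wedge x_j)f(x_i\vee x_j)=f(x_i)f(x_j)$ and the nonvanishing of $f$ --- is exactly the standard argument behind it. Your closing remarks on the existence of both $x_i\wedge x_j$ and $x_i\vee x_j$ and on the invertibility of $R$ are appropriate bookkeeping and consistent with how the paper later uses the factorization.
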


Proposition~\ref{RELETR} may be understood as the matrix analogue of the semimultiplicative property of Definition~\ref{semimultipl}.

The estimation of the norms of the factors comprising meet and join matrices plays a crucial role in the following section. To this end, we observe that the matrix $E$ in~\eqref{Emat} belongs to the matrix algebra $K(n)$ of all $n\times n$ lower triangular $0,1$ matrices such that each main diagonal entry is equal to $1$. Clearly every matrix $X\in K(n)$ is real and nonsingular and thus $XX^\textup{T}$ is positive definite. We now define the positive constants $c_n$~\cite{AR} and $C_n$~\cite{i1} depending only on $n$ such that
\[
c_n=\min\{\lambda\mid X\in K(n),\ \lambda\ \text{is the smallest eigenvalue of }XX^\textup{T}\}
\]
and
\[
C_n=\max\{\lambda\mid X\in K(n),\ \lambda\ \text{is the largest eigenvalue of }XX^\textup{T}\}.
\]
In the following sections, we use the constants $c_n$ and $C_n$ to construct bounds for generalized eigenvalues of meet and join matrices.

Calculating the values of $c_n$ and $C_n$ directly from their respective definitions is intractable for even moderate values of $n$ since the number of elements in $K(n)$ grows exponentially as $\#K(n)=2^{n(n-1)/2}$. However, it has been shown recently that the representative matrices in $K(n)$ corresponding to the constants $c_n$ and $C_n$ are known a priori. In the paper~\cite{i1}, it has been demonstrated that the symmetric $n\times n$ matrix $M_n$ satisfying
\[
C_n=\|M_n\|,
\]
where $\|\cdot\|$ denotes the spectral norm, is given by
\[
(M_n)_{i,j}=\min\{i,j\},\quad i,j\in\{1,2,\ldots,n\}.
\]

In a recent work, Alt{\i}n{\i}\c{s}{\i}k et al.~\cite{ihk} have conversely proven that the matrix $Y_n\in K(n)$ defined by
\[
(Y_n)_{i,j}=\begin{cases}1,&\text{if }i=j,\\ \frac{1-(-1)^{i+j}}{2},&\text{if }i>j,\\ 0&\text{otherwise}\end{cases}
\]
satisfies $c_n=\|Y_nY_n^\textup{T}\|$. These identities can be used to efficiently compute of the coefficients $c_n$ and $C_n$ using readily available mathematical software such as MATLAB or Mathematica.

\section{Eigenvalues of meet matrices with respect to join matrices}\label{gcdlcmsec}
Let $(P,\preceq,\wedge,\vee,\hat 0)$ be a poset, $f$ a complex-valued, semimultiplicative function defined on $P$, and suppose that $S=\{x_1,x_2,\ldots,x_n\}$ is a finite subset of $P$ such that $x_i\preceq x_j$ only if $i\leq j$. We consider the generalized eigenvalue problem of finding $(\lambda,x)\in\mathbb{C}\times(\mathbb{C}^n\setminus\{0\})$ such that
\begin{equation}
(S)_fx=\lambda[S]_fx,\label{geneig}
\end{equation}
where $\lambda$ is called the $[S]_f$-eigenvalue of $(S)_f$ and $x$ the corresponding eigenvector.

In order to be able to analyze the spectrum of the system~\eqref{geneig}, we first introduce the following lemma showing that the spectrum is equivalent to the eigenvalues of another matrix.
\begin{lemma}\label{gcdlcmlemma}
Let $(P,\preceq,\wedge,\vee,\hat 0)$ be a poset and $f$ a complex-valued, semimultiplicative function defined on $P$ such that $f(x)\neq 0$ for all $x\in P$ and define $g(x)=1/f(x)$. Let $S=\{x_1,x_2,\ldots,x_n\}$ be a finite, meet closed subset of $P$ such that $x_i\preceq x_j$ only if $i\leq j$. Let
\begin{equation}
l_i=\sum_{\substack{z\preceq x_i\\ z\not\preceq x_j,\ j<i}}(g_d*\mu)(\hat 0,z)\neq 0\quad\text{for all }i\in\{1,2,\ldots,n\}.\label{invassumpt}
\end{equation}
Then the $[S]_f$-eigenvalues $\lambda\in\mathbb{C}$ solving
\[
(S)_fx=\lambda[S]_fx
\]
for some $x\in\mathbb{C}^n\setminus\{0\}$ are precisely the eigenvalues of the system
\[
L^{-1}Ay=\lambda y,
\]
where $y\in\mathbb{C}^n\setminus\{0\}$, $L={\rm diag}(l_1,\ldots,l_n)$, $A=(RE)^{-1}(S)_f(E^\textup{T}R)^{-1}$, and $R={\rm diag}(f(x_1),\ldots,f(x_n))$ and $E$ is the $n\times n$ matrix defined in~\eqref{Emat}.
\end{lemma}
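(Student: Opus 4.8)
The plan is to collapse the generalized eigenvalue problem~\eqref{geneig} into an ordinary one by feeding both structural factorizations at our disposal into the equation. First I would apply Proposition~\ref{RELETR} to $f$: since $f$ is semimultiplicative and never vanishes, $[S]_f = R(S)_{1/f}R = R(S)_g R$ with $R = {\rm diag}(f(x_1),\ldots,f(x_n))$. Then, because $S$ is meet closed, Proposition~\ref{EDET} applies to the meet matrix $(S)_g$ and yields $(S)_g = ELE^{\textup{T}}$, where $E$ is the matrix in~\eqref{Emat} and $L = {\rm diag}(l_1,\ldots,l_n)$ has exactly the diagonal entries~\eqref{invassumpt} (the formula~\eqref{Dmat} applied to $g$ in place of $f$). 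Combining the two identities gives $[S]_f = RELE^{\textup{T}}R$.

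Next I would exploit invertibility. The diagonal matrix $R$ is invertible since $f(x_i)\neq 0$, and $E\in K(n)$ is invertible, so $A = (RE)^{-1}(S)_f(E^{\textup{T}}R)^{-1}$ is well defined and equivalently $(S)_f = REAE^{\textup{T}}R$. Substituting this and $[S]_f = RELE^{\textup{T}}R$ into~\eqref{geneig} produces $REAE^{\textup{T}}Rx = \lambda\, RELE^{\textup{T}}Rx$; cancelling the invertible factor $RE$ on the left and putting $y = E^{\textup{T}}Rx$ turns this into $Ay = \lambda Ly$. Since $x\mapsto y = E^{\textup{T}}Rx$ is a linear bijection of $\mathbb{C}^n$, it maps $\mathbb{C}^n\setminus\{0\}$ onto itself, so $\lambda$ is an $[S]_f$-eigenvalue of $(S)_f$ if and only if $Ay=\lambda Ly$ holds for some $y\in\mathbb{C}^n\setminus\{0\}$. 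Finally, the standing assumption~\eqref{invassumpt} forces every $l_i\neq 0$, hence $L$ is invertible, and $Ay=\lambda Ly$ is equivalent to $L^{-1}Ay = \lambda y$, which is the asserted reduction.

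I do not anticipate a genuine obstacle; the argument is a careful bookkeeping of which hypotheses are used where --- meet closedness for the $ELE^{\textup{T}}$ factorization, non-vanishing of $f$ for the matrix $R$, and condition~\eqref{invassumpt} for the invertibility of $L$. The only point that warrants a line of justification is the identification of the diagonal matrix produced by Proposition~\ref{EDET} for $g$ with the matrix $L$ built from~\eqref{invassumpt}, together with the remark that~\eqref{invassumpt} is precisely what makes $L$, and therefore $L^{-1}A$, meaningful.
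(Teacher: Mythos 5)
Your proposal is correct and follows essentially the same route as the paper's proof: decompose $[S]_f=RELE^{\textup{T}}R$ via Propositions~\ref{RELETR} and~\ref{EDET}, substitute $y=E^{\textup{T}}Rx$, and invert the nonsingular factors, with condition~\eqref{invassumpt} guaranteeing $L^{-1}$ exists. Your added remark that $x\mapsto E^{\textup{T}}Rx$ is a bijection of $\mathbb{C}^n\setminus\{0\}$ onto itself is a point the paper leaves implicit.
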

\begin{proof}
Let $(\lambda,x)\in\mathbb{C}\times\mathbb{C}^n\setminus\{0\}$ be an eigenpair solution of the system
\[
(S)_fx=\lambda[S]_fx.
\]
By Propositions~\ref{EDET} and~\ref{RELETR}, these matrices have the decompositions $(S)_f=EDE^\textup{T}$ and $[S]_f=RELE^\textup{T}R$, where $E$ is given by~\eqref{Emat}, $R={\rm diag}(f(x_1),\ldots,f(x_n))$, and $D={\rm diag}(d_1,\ldots,d_n)$ and $L={\rm diag}(l_1,\ldots,l_n)$ are diagonal matrices with elements
\[
d_i=\sum_{\substack{z\preceq x_i\\ z\not\preceq x_j,\ j<i}}(f_d*\mu)(\hat 0,z)\ \text{and}\ l_i=\sum_{\substack{z\preceq x_i\\ z\not\preceq x_j,\ j<i}}(g_d*\mu)(\hat 0,z),\quad i\in\{1,2,\ldots,n\}.
\]
The assumption~\eqref{invassumpt} guarantees that $[S]_f$ is invertible, which yields
\begin{align*}
&(S)_fx=\lambda [S]_fx\\
\Leftrightarrow\quad &EDE^\textup{T}x=\lambda RELE^\textup{T}Rx\\
\Leftrightarrow\quad &L^{-1}(RE)^{-1}EDE^\textup{T}(E^\textup{T}R)^{-1}y=\lambda y,
\end{align*}
where $y=E^\textup{T}Rx$.
\end{proof}

If the meet matrix is positive semidefinite, then the $[S]_f$-eigenvalues of $(S)_f$ are real and its $[S]_f$-inertia (i.e., the signs of its $[S]_f$-eigenvalues) is known.
\begin{corollary}\label{gcdlcmsigns}
In addition to the assumptions of Lemma~\ref{gcdlcmlemma}, let
\[
d_i=\sum_{\substack{z\preceq x_i\\ z\not\preceq x_j,\ j<i}}(f_d*\mu)(\hat 0,z)\geq 0\quad\text{for all }i\in\{1,2,\ldots,n\}.
\]
Then the $[S]_f$-eigenvalues of $(S)_f$ are real and they can be ordered $(\lambda_i)_{i=1}^n$ (counting multiplicities) such that
\[
{\rm sign}\,\lambda_i={\rm sign}\sum_{\substack{z\preceq x_i\\ z\not\preceq x_j,\ j<i}}(g_d*\mu)(\hat 0,z)\quad\text{for all }i\in\{1,2,\ldots,n\}.
\]
\end{corollary}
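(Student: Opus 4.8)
The plan is to use Lemma~\ref{gcdlcmlemma} to replace the generalized eigenvalue problem by an ordinary eigenvalue problem whose coefficient matrix is congruent to $(S)_f$, and then to extract reality of the spectrum from symmetry and the sign statement from Sylvester's law of inertia. First I would unpack the extra hypothesis $d_i\ge 0$. By Proposition~\ref{EDET} one has $(S)_f=EDE^\textup{T}$ with $D={\rm diag}(d_1,\dots,d_n)$; since $E$ is a real $0,1$ matrix and the $d_i$ are real and nonnegative, $(S)_f$ is a \emph{real} symmetric positive semidefinite matrix (Remark~\ref{pdremark}). In particular $f(x_i)=((S)_f)_{i,i}$ is real and, being nonzero, makes $R={\rm diag}(f(x_1),\dots,f(x_n))$ real and invertible; moreover the entries $1/f(x_i\wedge x_j)$ of $(S)_{1/f}$ are real, and applying Proposition~\ref{EDET} to the function $1/f$ gives $(S)_{1/f}=ELE^\textup{T}$, so $L=E^{-1}(S)_{1/f}(E^\textup{T})^{-1}$ is a real nonsingular diagonal matrix whose diagonal entries are precisely the numbers $l_i$ of~\eqref{invassumpt}. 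Since $R$ is diagonal, $(E^\textup{T}R)^{-1}=((RE)^{-1})^\textup{T}$, so the matrix $A=(RE)^{-1}(S)_f(E^\textup{T}R)^{-1}$ appearing in Lemma~\ref{gcdlcmlemma} is congruent to $(S)_f$ via the real invertible matrix $(RE)^{-1}$, and is therefore real symmetric positive semidefinite.

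Next I would settle reality of the spectrum. Writing $D^{1/2}={\rm diag}(\sqrt{d_1},\dots,\sqrt{d_n})$ and $U=(RE)^{-1}ED^{1/2}$, we have $A=UU^\textup{T}$, so by Lemma~\ref{gcdlcmlemma} the $[S]_f$-eigenvalues of $(S)_f$ are exactly the eigenvalues of $L^{-1}A=L^{-1}UU^\textup{T}$. These coincide with the eigenvalues of $U^\textup{T}L^{-1}U$ (the characteristic polynomials of $XY$ and $YX$ agree for square matrices $X,Y$), and $U^\textup{T}L^{-1}U$ is real symmetric because $L^{-1}$ is; hence all $[S]_f$-eigenvalues are real. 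For the sign statement, provided $U$ is invertible, $U^\textup{T}L^{-1}U$ is congruent to $L^{-1}$, which has the same inertia as $L={\rm diag}(l_1,\dots,l_n)$ --- namely $\#\{i:l_i>0\}$ positive and $\#\{i:l_i<0\}$ negative diagonal entries, and no zero ones since every $l_i\ne 0$. By Sylvester's law of inertia $U^\textup{T}L^{-1}U$, and hence $L^{-1}A$, has exactly that many positive, that many negative, and (no) zero eigenvalues, counted with multiplicity. Thus the multiset $\{{\rm sign}\,\lambda_i\}_{i=1}^n$ equals $\{{\rm sign}\,l_i\}_{i=1}^n$, and reindexing $\lambda_1,\dots,\lambda_n$ accordingly gives ${\rm sign}\,\lambda_i={\rm sign}\,l_i$ for every $i$.

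The remaining point --- and the one I expect to be the main obstacle --- is to verify that $U$ is invertible, i.e.\ that under the present hypotheses $(S)_f$ is positive \emph{definite} ($d_i>0$ for all $i$), not merely semidefinite. This is genuinely needed: if some $d_k=0$, then $L^{-1}A$ is singular, whereas ${\rm sign}\,l_i\ne 0$ for every $i$, so the asserted ordering could not exist. The point is that the hypotheses of Lemma~\ref{gcdlcmlemma} preclude $d_k=0$. From the identity $f(x_j)=\sum_{x_i\in S,\ x_i\preceq x_j}d_i$, valid for all $x_j\in S$ by Proposition~\ref{EDET}, M\"obius inversion over the induced poset $(S,\preceq)$ yields $d_k=\sum_{x_i\in S,\ x_i\preceq x_k}\mu_S(x_i,x_k)f(x_i)$ with $\mu_S$ the M\"obius function of $(S,\preceq)$, and the same computation with $1/f$ in place of $f$ produces $l_k$. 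Exploiting the semimultiplicativity of $f$ (equivalently of $1/f$) together with the fact that the down-set $\{x_i\in S:x_i\preceq x_k\}$ is itself meet closed --- hence has a least element --- one should be able to show that $d_k=0$ forces $l_j=0$ for some $j$, contradicting~\eqref{invassumpt}. I anticipate that carrying this out cleanly requires an induction on $(S,\preceq)$; granting it, the congruence argument above finishes the proof and shows, in addition, that none of the $[S]_f$-eigenvalues of $(S)_f$ vanishes.
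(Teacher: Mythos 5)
Your argument follows essentially the same route as the paper's proof: reduce to $L^{-1}A$ via Lemma~\ref{gcdlcmlemma}, obtain reality of the spectrum from the symmetry of a matrix sharing the same characteristic polynomial (your $U^{\textup{T}}L^{-1}U$ versus the paper's $A^{1/2}L^{-1}A^{1/2}$ --- the same square-root trick), and read off the signs from Sylvester's law of inertia applied to a congruence of $L^{-1}$. The preparatory observations (reality of $R$, $L$, and $A$, the identity $(E^{\textup{T}}R)^{-1}=((RE)^{-1})^{\textup{T}}$, and the factorization $A=UU^{\textup{T}}$) are all correct, and your $XY$/$YX$ characteristic-polynomial argument for reality is actually slightly more careful than the paper's, since it does not require $A$ to be invertible.

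The one place where your proof is incomplete is the final paragraph, and you say so yourself: the claim that $d_k=0$ would force some $l_j=0$, contradicting~\eqref{invassumpt}, is only sketched (``one should be able to show\dots''), so the positive definiteness of $A$ --- which Sylvester's law genuinely needs --- is not established. You have, however, put your finger on a real subtlety: the paper's own proof writes $L^{-1}A\sim A^{1/2}L^{-1}A^{1/2}$ and then calls the latter a congruence transformation of $L^{-1}$, both of which presuppose that $A^{1/2}$ is invertible, and it offers no justification for this under the stated hypothesis $d_i\geq 0$. As you correctly note, if some $d_k=0$ then $0$ is an $[S]_f$-eigenvalue of $(S)_f$ while every ${\rm sign}\,l_i\neq 0$, so the asserted ordering could only hold if the hypotheses exclude this case; your suggested mechanism (semimultiplicativity of $f$ tying the vanishing of $d_i$ to that of $l_i$, as one can check explicitly on small lattices) is the natural way to close it. So: same approach as the paper, correct up to a gap that you have identified but not filled --- and which the published proof passes over in silence.
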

\begin{proof}
The diagonal matrix $D={\rm diag}(d_1,\ldots,d_n)$ is positive semidefinite and it follows immediately that the matrices $(S)_f=EDE^\textup{T}$ and $A=(RE)^{-1}(S)_f(E^\textup{T}R)^{-1}$ are positive semidefinite as well. In particular, the principal square root $A^{1/2}$ exists allowing us to carry out the similarity transformation
\[
L^{-1}A\sim A^{1/2}L^{-1}A^{1/2},
\]
where the spectrum of the latter matrix coincides with the $[S]_f$-eigenvalues of $(S)_f$.

The matrix $A^{1/2}L^{-1}A^{1/2}$ is symmetric, which shows that the $[S]_f$-eigenvalues of $(S)_f$ are real. On the other hand, the matrix $A^{1/2}L^{-1}A^{1/2}$ is a congruence transformation of $L^{-1}$, meaning that the inertia is invariant between the two by~\cite[Theorem~8.1.12]{GolubVanLoan}.
\end{proof}
\subsection{Global bounds for the $[S]_f$-eigenvalues of $(S)_f$}
In this section, we present, under nonsingularity of $[S]_f$, upper and lower bounds for the $[S]_f$-eigenvalues of $(S)_f$ defined on a finite,  meet closed set $S$ endowed with a nonvanishing, complex-valued semimultiplicative incidence function $f$. By a global bound, we mean a uniform bound that holds for any meet and join matrices which subscribe to these qualities.
\begin{theorem}\label{meetandjoinbound}
Let $(P,\preceq,\wedge,\vee,\hat 0)$ be a poset, $f$ a semimultiplicative function on $P$ such that $f(x)\neq 0$ for all $x\in P$, and let $S=\{x_1,x_2,\ldots,x_n\}$ be a finite, meet closed subset of $P$ such that $x_i\preceq x_j$ only if $i\leq j$. Define $g(x)=1/f(x)$ for $x\in P$ and let
\[
\sum_{\substack{z\preceq x_i\\ z\not\preceq x_j,\ j<i}}(g_d*\mu)(\hat 0,z)\neq 0\quad\text{for all }i\in\{1,2,\ldots,n\}.
\]
Then the $[S]_f$-eigenvalues $\lambda\in\mathbb{C}$ of $(S)_f$ are bounded from above by
\[
|\lambda|\leq\frac{M_fm_g^{-1}C_nc_n^{-1}}{\min_{1\leq i\leq n}|f(x_i)|^2},
\]
where
\[
M_f=\max_{1\leq i\leq n}\bigg|\sum_{\substack{z\preceq x_i\\ z\not\preceq x_j,\ j<i}}(f_d*\mu)(\hat 0,z)\bigg|\ \text{and}\  m_g=\min_{1\leq i\leq n}\bigg|\sum_{\substack{z\preceq x_i\\ z\not\preceq x_j,\ j<i}}(g_d*\mu)(\hat 0,z)\bigg|.
\]
If in addition it holds that
\begin{equation}
\sum_{\substack{z\preceq x_i\\ z\not\preceq x_j,\ j<i}}(f_d*\mu)(\hat 0,z)\neq 0\quad\text{for all }i\in\{1,2,\ldots,n\},\label{gcdlcminv}
\end{equation}
then the $[S]_f$-eigenvalues $\lambda$ of $(S)_f$ are bounded from below by
\[
|\lambda|\geq\frac{m_fM_g^{-1}C_n^{-1}c_n}{\max_{1\leq i\leq n}|f(x_i)|^2},
\]
where
\[
m_f=\min_{1\leq i\leq n}\bigg|\sum_{\substack{z\preceq x_i\\ z\not\preceq x_j,\ j<i}}(f_d*\mu)(\hat 0,z)\bigg|\ \text{and}\  M_g=\max_{1\leq i\leq n}\bigg|\sum_{\substack{z\preceq x_i\\ z\not\preceq x_j,\ j<i}}(g_d*\mu)(\hat 0,z)\bigg|.
\]
\end{theorem}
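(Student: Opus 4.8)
The plan is to substitute the factorizations already established and then reduce everything to submultiplicativity of the spectral norm. By Propositions~\ref{EDET} and~\ref{RELETR} (the former applied both to $f$ and to $g=1/f$ on the meet closed set $S$) we have $(S)_f=EDE^{\textup T}$ and $[S]_f=RELE^{\textup T}R$, where $E\in K(n)$, $R={\rm diag}(f(x_1),\ldots,f(x_n))$, and $D={\rm diag}(d_1,\ldots,d_n)$, $L={\rm diag}(l_1,\ldots,l_n)$ carry on their diagonals exactly the sums $\sum_{z\preceq x_i,\ z\not\preceq x_j,\ j<i}(f_d*\mu)(\hat 0,z)$ and $\sum_{z\preceq x_i,\ z\not\preceq x_j,\ j<i}(g_d*\mu)(\hat 0,z)$. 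By Lemma~\ref{gcdlcmlemma}, the $[S]_f$-eigenvalues of $(S)_f$ are precisely the eigenvalues of $L^{-1}A$ with $A=(RE)^{-1}(S)_f(E^{\textup T}R)^{-1}$; since $R$ is diagonal, $(E^{\textup T}R)^{-1}=\bigl((RE)^{-1}\bigr)^{\textup T}$, hence $A=(RE)^{-1}(S)_f\bigl((RE)^{-1}\bigr)^{\textup T}$ and $A^{-1}=(RE)^{\textup T}(S)_f^{-1}(RE)$ whenever $(S)_f$ is invertible. Two facts about $E\in K(n)$ will be used repeatedly: $\|E\|^2=\lambda_{\max}(EE^{\textup T})\le C_n$ and $\|E^{-1}\|^2=1/\lambda_{\min}(E^{\textup T}E)=1/\lambda_{\min}(EE^{\textup T})\le c_n^{-1}$, the middle equality because $E^{\textup T}E$ and $EE^{\textup T}$ share the same eigenvalues and the outer estimates being the definitions of $C_n$ and $c_n$. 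Finally, $R$, $L$, $D$ are (possibly complex) diagonal, so their spectral norms and those of their inverses are just the largest, resp.\ reciprocal of the smallest, moduli of their diagonal entries.

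For the upper bound, the spectral radius is dominated by the spectral norm, so every $[S]_f$-eigenvalue $\lambda$ satisfies $|\lambda|\le\|L^{-1}A\|\le\|L^{-1}\|\,\|(RE)^{-1}\|^2\,\|(S)_f\|$. Now $\|L^{-1}\|=1/m_g$ (finite by the standing hypothesis), $\|(RE)^{-1}\|\le\|E^{-1}\|\,\|R^{-1}\|\le c_n^{-1/2}/\min_{1\le i\le n}|f(x_i)|$, and $\|(S)_f\|=\|EDE^{\textup T}\|\le\|E\|^2\,\|D\|\le C_nM_f$. Multiplying the three estimates gives exactly $|\lambda|\le M_fm_g^{-1}C_nc_n^{-1}/\min_{1\le i\le n}|f(x_i)|^2$. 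Note that~\eqref{gcdlcminv} plays no role here; only invertibility of $L$ is needed.

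For the lower bound, assume in addition~\eqref{gcdlcminv}, i.e.\ that $D$ is invertible. Then $(S)_f=EDE^{\textup T}$ and hence $A$ are invertible, so every $[S]_f$-eigenvalue $\lambda$ is nonzero and $1/\lambda$ is an eigenvalue of $(L^{-1}A)^{-1}=A^{-1}L$; thus $|\lambda|^{-1}\le\|A^{-1}L\|\le\|A^{-1}\|\,\|L\|$ with $\|L\|=M_g$. Using $A^{-1}=(RE)^{\textup T}(S)_f^{-1}(RE)$ and $(S)_f^{-1}=(E^{\textup T})^{-1}D^{-1}E^{-1}$, one gets $\|A^{-1}\|\le\|RE\|^2\|(S)_f^{-1}\|\le\bigl(C_n\max_{1\le i\le n}|f(x_i)|^2\bigr)\bigl(c_n^{-1}m_f^{-1}\bigr)$, where $\|RE\|\le\|R\|\,\|E\|$, $\|E\|^2\le C_n$, and $\|(S)_f^{-1}\|\le\|E^{-1}\|^2\|D^{-1}\|\le c_n^{-1}/m_f$. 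Combining and taking reciprocals yields $|\lambda|\ge m_fM_g^{-1}C_n^{-1}c_n/\max_{1\le i\le n}|f(x_i)|^2$, as claimed.

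The proof is essentially bookkeeping once the factorizations are inserted, and I do not expect a genuine obstacle. The points that require care are: (i) that we bound $|\lambda|$, not $\lambda$ itself (which need not be real in the general semimultiplicative setting), via the inequality $\rho(\cdot)\le\|\cdot\|$; (ii) the identifications $\|E\|^2\le C_n$ and $\|E^{-1}\|^2\le c_n^{-1}$, which hinge on $c_n$ controlling $\lambda_{\min}(EE^{\textup T})=\lambda_{\min}(E^{\textup T}E)$ for $E\in K(n)$; and (iii) the transpose bookkeeping, notably $(E^{\textup T}R)^{-1}=\bigl((RE)^{-1}\bigr)^{\textup T}$ and $A^{-1}=(RE)^{\textup T}(S)_f^{-1}(RE)$. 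These are the only potential pitfalls.
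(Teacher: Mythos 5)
Your proposal is correct and follows essentially the same route as the paper: both factor $(S)_f=EDE^{\textup{T}}$ and $[S]_f=RELE^{\textup{T}}R$, reduce to the matrix $L^{-1}A$ (and its inverse for the lower bound), and then apply submultiplicativity of the spectral norm together with the bounds $\|EE^{\textup{T}}\|\leq C_n$ and $\|(EE^{\textup{T}})^{-1}\|\leq c_n^{-1}$ for $E\in K(n)$. The only differences are cosmetic regroupings of the norm factors, and your explicit remarks on the transpose bookkeeping and on $\rho(\cdot)\leq\|\cdot\|$ match what the paper does implicitly.
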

\begin{proof}
By Propositions~\ref{EDET} and~\ref{RELETR}, we have the decompositions $(S)_f=EDE^\textup{T}$ and $[S]_f=RELE^\textup{T}R$ and it suffices to inspect the spectral radius of $L^{-1}A$, where $A=(RE)^{-1}EDE^\textup{T}(E^\textup{T}R)^{-1}$. We use the spectral norm which we denote by $\|\cdot\|$. Since
\[
\|MM^\textup{T}\|=\|M\|\cdot\|M^\textup{T}\|=\|M\|^2
\]
for any square matrix $M$, the spectral radius of $L^{-1}A$ is bounded by
\begin{align*}
\rho(L^{-1}A)&\leq\|L^{-1}\|\cdot\|A\|=\|L^{-1}\|\cdot\|(RE)^{-1}EDE^\textup{T}(E^\textup{T}R)^{-1}\|\\
&\leq \|L^{-1}\|\cdot \|E^{-1}\|\cdot\|R^{-1}\|\cdot\|E\|\cdot\|D\|\cdot\|E^\textup{T}\|\cdot\|R^{-1}\|\cdot\|E^{-\textup{T}}\|\\
&=(\|R^{-1}\|)^2\cdot \|L^{-1}\|\cdot \|D\|\cdot\|EE^\textup{T}\|\cdot\|(EE^\textup{T})^{-1}\|.
\end{align*}
For the diagonal matrices $R^{-1}$, $L^{-1}$, and $D$, we obtain
\begin{align*}
&\|R^{-1}\|=\left\|{\rm diag}\left(\frac{1}{f(x_1)},\ldots,\frac{1}{f(x_n)}\right)\right\|=\frac{1}{\min_{1\leq i\leq n}|f(x_i)|},\\
&\|L^{-1}\|=\left\|{\rm diag}\left(\frac{1}{l_1},\ldots,\frac{1}{l_n}\right)\right\|=m_g^{-1},\\
&\|D\|=\left\|{\rm diag}(d_1,\ldots,d_n)\right\|=M_f.
\end{align*}

The matrix $E$ belongs to the set $K(n)$ defined in Section~\ref{notations} and hence
\[
\|(EE^\textup{T})^{-1}\|\leq\frac{1}{c_n}
\]
and
\[
\|EE^\textup{T}\|\leq C_n.
\]
Thus
\[
|\lambda|\leq\frac{M_fm_g^{-1}C_nc_n^{-1}}{\min_{1\leq i\leq n}|f(x_i)|^2}.
\]

To obtain the lower bound under the assumption~\eqref{gcdlcminv}, we first inspect the spectral radius of $(L^{-1}A)^{-1}$. To this end, we compute
\begin{align*}
\rho((L^{-1}A)^{-1})&\leq \|A^{-1}\|\cdot\|L\|=\|E^\textup{T}RE^{-\textup{T}}D^{-1}E^{-1}RE\|\cdot\|L\|\\
&\leq \|E^\textup{T}\|\cdot\|R\|\cdot\|E^{-\textup{T}}\|\cdot\|D^{-1}\|\cdot\|E^{-1}\|\cdot\|R\|\cdot\|E\|\cdot\|L\|\\
&=\|R\|^2\cdot\|L\|\cdot\|D^{-1}\|\cdot\|EE^\textup{T}\|\cdot\|(EE^\textup{T})^{-1}\|\\
&\leq \max_{1\leq i\leq n}|f(x_i)|^2\cdot M_gm_f^{-1}C_nc_n^{-1},
\end{align*} 
where similar argumentation is used to obtain the constants as before. For the $[S]_f$-eigenvalues $\lambda\in\mathbb{C}\setminus\{0\}$ of $(S)_f$, this implies that
\[
\frac{1}{|\lambda|}\leq \max_{1\leq i\leq n}|f(x_i)|^2\cdot M_gm_f^{-1}C_nc_n^{-1},
\]
and thus
\[
|\lambda|\geq\frac{m_fM_g^{-1}C_n^{-1}c_n}{\max_{1\leq i\leq n}|f(x_i)|^2}.
\]
This concludes the proof.
\end{proof}
By its nature, a global bound provides a fairly pessimistic estimate for both the dominant and the minimal eigenvalue. In fact, the growth rate of $C_nc_n^{-1}$ is exponential as $n\to\infty$. However, we remark that the obtained upper bound is very similar to the global bounds obtained for the regular eigenvalues of meet matrices in~\cite{i1}.
\subsection{Local bounds for the $[S]_f$-eigenvalues of $(S)_f$}

In this section, we proceed to derive so-called local bounds for the $[S]_f$-eigenvalues of $(S)_f$. The choice of terminology stems from the fact that our local bounds utilize incidental information that is immediately discernible from the lattice's structure. In other words, the partial order relation embedded to a meet semilattice $(P,\preceq,\wedge,\vee)$ uniquely defines the associated M\"{o}bius function $\mu$ of $P$, which can be used to derive improved bounds for generalized eigenvalues by utilizing the properties inherent to the lattice of interest.

We begin by introducing a convenient lemma that characterizes the elements of the auxiliary matrix described in Lemma~\ref{gcdlcmlemma}.
\begin{lemma}\label{gcdlcmelem}In addition to the assumptions of Lemma~\ref{gcdlcmlemma}, let $S$ be lower closed. Then it holds that
\[
(L^{-1}A)_{i,j}=l_i^{-1}\sum_{\alpha,\beta=1}^n\frac{\mu(x_\alpha,x_i)\mu(x_\beta,x_j)}{f(x_\alpha\vee x_\beta)}\quad\text{for }i,j\in\{1,2,\ldots,n\}.
\]\end{lemma}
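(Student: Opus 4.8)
The plan is to unwind the definition $A = (RE)^{-1}(S)_f(E^{\textup{T}}R)^{-1}$ componentwise, using the fact that on a lower closed set the Möbius-type inversion connects the matrix $E$ of Proposition~\ref{EDET} to the Möbius function of $P$. First I would recall that, since $S$ is lower closed, the matrix $E$ in~\eqref{Emat} satisfies $E_{i,j} = \zeta(x_j,x_i)$, and its inverse is given by the Möbius function, $(E^{-1})_{i,j} = \mu(x_j,x_i)$; this is the standard incidence-algebra fact that $\zeta$ and $\mu$ are mutually inverse under convolution, transported to the finite matrix level because $S$ is lower closed (so that all the intermediate lattice elements needed for the convolution sums lie in $S$). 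Consequently $(E^{-\textup{T}})_{i,j} = (E^{-1})_{j,i} = \mu(x_i,x_j)$.

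Next I would use $R^{-1} = {\rm diag}(1/f(x_1),\ldots,1/f(x_n)) = {\rm diag}(g(x_1),\ldots,g(x_n))$, so that $(RE)^{-1} = E^{-1}R^{-1}$ has entries $(E^{-1}R^{-1})_{i,k} = \mu(x_k,x_i)\,g(x_k)$, and symmetrically $((E^{\textup{T}}R)^{-1})_{l,j} = g(x_l)\,\mu(x_l,x_j)$. Then I would write out the triple matrix product
\[
A_{i,j} = \sum_{k,l=1}^n \mu(x_k,x_i)\,g(x_k)\,((S)_f)_{k,l}\,g(x_l)\,\mu(x_l,x_j).
\]
Now invoke the semimultiplicativity of $f$: since $((S)_f)_{k,l} = f(x_k\wedge x_l)$ and $g(x_k)g(x_l) = 1/(f(x_k)f(x_l)) = 1/f(x_k\wedge x_l)\cdot 1/f(x_k\vee x_l)$, the product $g(x_k)\,f(x_k\wedge x_l)\,g(x_l)$ collapses to $1/f(x_k\vee x_l)$. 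This is the key step where the hypotheses on $f$ are used, and I expect it to be the only place requiring care — one must check that $f(x_k\wedge x_l)\neq 0$ so the cancellation is legitimate, which follows from the standing assumption $f(x)\neq 0$ for all $x\in P$ together with $S$ being meet closed (guaranteed by Lemma~\ref{gcdlcmlemma}, whose assumptions we are importing), so $x_k\wedge x_l\in S$ and $f(x_k\wedge x_l)\neq 0$.

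Finally I would relabel the summation indices $k\mapsto\alpha$, $l\mapsto\beta$ and left-multiply by $L^{-1} = {\rm diag}(1/l_1,\ldots,1/l_n)$, which simply scales row $i$ by $l_i^{-1}$, yielding
\[
(L^{-1}A)_{i,j} = l_i^{-1}\sum_{\alpha,\beta=1}^n \frac{\mu(x_\alpha,x_i)\,\mu(x_\beta,x_j)}{f(x_\alpha\vee x_\beta)},
\]
as claimed. The bulk of the argument is bookkeeping; the genuine content is the identification $(E^{-1})_{i,j}=\mu(x_j,x_i)$ (valid precisely because $S$ is lower closed) and the pointwise collapse via semimultiplicativity. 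I would also note in passing that $x_\alpha\vee x_\beta$ need not lie in $S$, but $f$ is defined on all of $P$ and is nonvanishing there, so the expression is well defined regardless.
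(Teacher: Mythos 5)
Your proposal is correct and follows essentially the same route as the paper: identify $(E^{-1})_{i,j}=\mu(x_j,x_i)$ (valid because $S$ is lower closed), expand the product $(RE)^{-1}(S)_f(E^{\textup{T}}R)^{-1}$ entrywise, and collapse $f(x_\alpha\wedge x_\beta)/(f(x_\alpha)f(x_\beta))$ to $1/f(x_\alpha\vee x_\beta)$ via semimultiplicativity. The only cosmetic difference is that the paper first expands $(S)_f=EDE^{\textup{T}}$ and then reassembles $\sum_\beta D_{\beta,\beta}E_{\alpha,\beta}E_{\gamma,\beta}=f(x_\alpha\wedge x_\gamma)$ via Proposition~\ref{EDET}, whereas you keep $(S)_f$ intact from the start, which is a slight shortcut through the same computation.
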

\begin{proof}
Let $B=L^{-1}(RE)^{-1}EDE^\textup{T}(E^\textup{T}R)^{-1}$, where the factors are defined as in Lemma~\ref{gcdlcmlemma}. Writing open the matrix products and utilizing the fact that $L$, $R$, and $D$ are diagonal matrices yields
\begin{align*}
B_{i,j}&=l_i^{-1}\sum_{\alpha,\alpha',\beta,\beta',\gamma,\gamma'=1}^nE_{i,\alpha}^{-1}R_{\alpha,\alpha'}^{-1}E_{\alpha',\beta}D_{\beta,\beta'}E_{\gamma,\beta'}R_{\gamma,\gamma'}^{-1}E_{j,\gamma'}^{-1}\\
&=l_i^{-1}\sum_{\alpha,\beta,\gamma=1}^nE_{i,\alpha}^{-1}R_{\alpha,\alpha}^{-1}E_{\alpha,\beta}D_{\beta,\beta}E_{\gamma,\beta}R_{\gamma,\gamma}^{-1}E_{j,\gamma}^{-1}\\
&=l_i^{-1}\sum_{\alpha,\gamma=1}^nE_{i,\alpha}^{-1}E_{j,\gamma}^{-1}R_{\alpha,\alpha}^{-1}R_{\gamma,\gamma}^{-1}\sum_{\beta=1}^nD_{\beta,\beta}E_{\alpha,\beta}E_{\gamma,\beta}
\end{align*}
for $i,j\in\{1,2,\ldots,n\}$. It follows from Proposition~\ref{EDET} that
\[
f(x_{\alpha}\wedge x_{\gamma})=(EDE^\textup{T})_{\alpha,\gamma}=\sum_{\beta=1}^nD_{\beta,\beta}E_{\alpha,\beta}E_{\gamma,\beta}
\]
and from the definitions of $E$ of $R$ we have that
\[
E_{i,j}^{-1}=\mu(x_j,x_i)\ \text{and}\  R_{i,i}=f(x_i)^{-1}\quad\text{for }i,j\in\{1,2,\ldots,n\}.
\]
Thus
\begin{align*}
B_{i,j}&=l_i^{-1}\sum_{\alpha,\gamma=1}^n\mu(x_\alpha,x_i)\mu(x_\gamma,x_j)\frac{f(x_\alpha\wedge x_\gamma)}{f(x_\alpha)f(x_\gamma)}=l_i^{-1}\sum_{\alpha,\gamma=1}^n\frac{\mu(x_\alpha,x_i)\mu(x_\gamma,x_j)}{f(x_\alpha\vee x_\gamma)},
\end{align*}
where the semimultiplicative property of $f$ was utilized on the final equality.
\end{proof}

The local bounds for the $[S]_f$-eigenvalues of $(S)_f$ are displayed in the following.
\begin{theorem}\label{localbound1}
Let $(P,\preceq,\wedge,\vee,\hat 0)$ be a poset, $f$ a semimultiplicative function on $P$ such that $f(x)\neq 0$ for all $x\in P$, and let $S=\{x_1,x_2,\ldots,x_n\}$ be a finite, lower closed subset of $P$ such that $x_i\preceq x_j$ only if $i\leq j$. Define $g(x)=1/f(x)$ for $x\in P$ and let
\[
\sum_{\substack{z\preceq x_i\\ z\not\preceq x_j,\ j<i}}(g_d*\mu)(\hat 0,z)\neq 0\quad\text{for all }x\in\{1,2,\ldots,n\}.
\]

(i) Each $[S]_f$-eigenvalue $\lambda\in\mathbb{C}$ of $(S)_f$ lies in one of the disks
\[
\left\{z\in\mathbb{C}:\bigg|z-l_i^{-1}\sum_{\alpha,\beta=1}^n\frac{\mu(x_\alpha,x_i)\mu(x_\beta,x_i)}{f(x_\alpha\vee x_\beta)}\bigg|\leq |l_i|^{-1}\sum_{\substack{1\leq j\leq n\\ j\neq i}}\bigg|\sum_{\alpha,\beta=1}^n\frac{\mu(x_\alpha,x_i)\mu(x_\beta,x_j)}{f(x_\alpha\vee x_\beta)}\bigg|\right\},\quad i\in\{1,2,\ldots,n\}.
\]

(ii) The $[S]_f$-eigenvalues $\lambda\in\mathbb{C}$ of $(S)_f$ can be bounded from above by
\[
|\lambda|\leq c_f^{-1}m_g^{-1}F_{n+1}(F_{n+3}-2),
\]
where
\[
c_f=\min_{1\leq i,j\leq n}|f(x_i\vee x_j)|\ \text{and}\  m_g=\min_{1\leq i\leq n}\bigg|\sum_{\substack{z\preceq x_i\\ z\not\preceq x_j,\ j<i}}(g_d*\mu)(\hat 0,z)\bigg|
\]
and $(F_i)_{i=1}^\infty$ denotes the Fibonacci sequence defined by the recursion $F_1=F_2=1$ and $F_{i+2}=F_i+F_{i+1}$ for $i\in\mathbb{Z_+}$.
\end{theorem}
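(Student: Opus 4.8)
The plan is to read off part~(i) from Gershgorin's circle theorem applied to the matrix of Lemma~\ref{gcdlcmelem}, and to obtain part~(ii) from the resulting absolute-row-sum bound on the spectral radius, with the Fibonacci numbers entering through a sharp estimate for the entries of $E^{-1}$. Observe first that a finite lower closed set is automatically meet closed, so Lemmas~\ref{gcdlcmlemma} and~\ref{gcdlcmelem} both apply; moreover no definiteness of $(S)_f$ (i.e.\ no sign condition on the $d_i$) is needed, since everything is extracted from the explicit entries of the auxiliary matrix. For part~(i): by Lemma~\ref{gcdlcmlemma} the $[S]_f$-eigenvalues of $(S)_f$ are precisely the eigenvalues of $L^{-1}A$, and by Lemma~\ref{gcdlcmelem} one has $(L^{-1}A)_{i,j}=l_i^{-1}\sum_{\alpha,\beta=1}^n\mu(x_\alpha,x_i)\mu(x_\beta,x_j)/f(x_\alpha\vee x_\beta)$. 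Gershgorin's theorem places every eigenvalue in a disk centered at some diagonal entry $(L^{-1}A)_{i,i}$ with radius $\sum_{j\neq i}|(L^{-1}A)_{i,j}|$; substituting this formula gives exactly the disks in the statement.

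For part~(ii), every $[S]_f$-eigenvalue $\lambda$ obeys $|\lambda|\le\max_i\sum_{j=1}^n|(L^{-1}A)_{i,j}|$ (this is the $\ell_\infty$ operator norm of $L^{-1}A$, and also follows from (i) via $|\lambda|\le|(L^{-1}A)_{i,i}|+\sum_{j\ne i}|(L^{-1}A)_{i,j}|$). Inserting the entry formula, bounding $|l_i|^{-1}\le m_g^{-1}$ and $|f(x_\alpha\vee x_\beta)|^{-1}\le c_f^{-1}$, and factoring the sum over $(\alpha,\beta)$ into a product of single sums yields
\[
|\lambda|\;\le\;m_g^{-1}c_f^{-1}\Bigl(\max_{1\le i\le n}\sum_{\alpha=1}^n|\mu(x_\alpha,x_i)|\Bigr)\Bigl(\sum_{j=1}^n\sum_{\beta=1}^n|\mu(x_\beta,x_j)|\Bigr).
\]
Since $(E^{-1})_{i,\alpha}=\mu(x_\alpha,x_i)$ for the matrix $E$ of~\eqref{Emat} (as already used in the proof of Lemma~\ref{gcdlcmelem}) and $E\in K(n)$, the first factor is the largest absolute row sum of $E^{-1}$ and the second is the sum of the absolute values of all entries of $E^{-1}$. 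It therefore suffices to establish that \emph{for every $E\in K(n)$ and all $1\le j<i\le n$ one has $|(E^{-1})_{i,j}|\le F_{i-j}$}: then the $m$-th absolute row sum is at most $1+\sum_{d=1}^{m-1}F_d=F_{m+1}$ (using $\sum_{d=1}^N F_d=F_{N+2}-1$), so the first factor is $\le F_{n+1}$ and, summing over $m$, the second is $\le\sum_{m=1}^n F_{m+1}=F_{n+3}-2$; combined with the display above this is the claim.

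The main obstacle is this entrywise bound, because the naive triangle inequality applied to $(E^{-1})_{i,j}=-\sum_{k=j}^{i-1}E_{i,k}(E^{-1})_{k,j}$ only gives $F_{i-j+1}$, which is too weak for the row-sum sharpening. The way around it is to work one column $j$ at a time. Fix $j$, put $u_m=(E^{-1})_{m,j}$ for $m\ge j$, and let $P_m$ (resp.\ $Q_m$) be the sum of the positive entries (resp.\ of the absolute values of the negative entries) among $u_j,\dots,u_m$; write $t^+=\max(t,0)$, $t^-=\max(-t,0)$. Splitting $u_{m+1}=-\sum_{k=j}^{m}E_{m+1,k}u_k$ into its positive-$u_k$ and negative-$u_k$ parts gives $u_{m+1}=B-A$ with $0\le A\le P_m$ and $0\le B\le Q_m$, whence the key inequalities $u_{m+1}^{+}\le Q_m$ and $u_{m+1}^{-}\le P_m$. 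These force $|u_{m+1}|\le\max(P_m,Q_m)$ and $\max(P_{m+1},Q_{m+1})\le P_m+Q_m$, so with $s_m=P_m+Q_m$ we get the Fibonacci recursion $s_{m+1}\le s_m+s_{m-1}$ together with $s_j=1$, $s_{j+1}\le 2$; hence $s_m\le F_{m-j+2}$ and finally $|u_i|\le\max(P_{i-1},Q_{i-1})\le s_{i-2}\le F_{i-j}$. (The extremal case is the matrix $Y_n$ of Section~\ref{notations}, whose inverse has absolute row sums exactly $F_2,F_3,\dots,F_{n+1}$, which is reassuring.)

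Once this lemma is in place the remaining work is routine: the identifications $(S)_f=EDE^{\textup{T}}$, $[S]_f=RELE^{\textup{T}}R$, $E^{-1}_{i,j}=\mu(x_j,x_i)$, $R_{i,i}=f(x_i)^{-1}$, and the semimultiplicative identity $f(x_\alpha\wedge x_\beta)f(x_\alpha\vee x_\beta)=f(x_\alpha)f(x_\beta)$ are exactly what is used in Lemmas~\ref{gcdlcmlemma} and~\ref{gcdlcmelem}, so no new structural input is required beyond the Fibonacci estimate and the two triangle inequalities above.
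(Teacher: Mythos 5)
Your argument for both parts follows the paper's proof essentially step for step: part~(i) is Gerschgorin applied to the matrix $L^{-1}A$ via Lemma~\ref{gcdlcmelem}, and part~(ii) is the absolute-row-sum bound with $|l_i|^{-1}\le m_g^{-1}$, $|f(x_\alpha\vee x_\beta)|^{-1}\le c_f^{-1}$, the factorization of the double sum into row sums of $E^{-1}$, and the Fibonacci partial-sum identity, exactly as in the paper. The one genuine difference is that the paper simply \emph{cites} Alt{\i}n{\i}\c{s}{\i}k et al.~\cite{ihk} for the entrywise estimate $|(E^{-1})_{i,j}|\le F_{i-j}$ for $X\in K(n)$, whereas you supply a self-contained proof of it. I checked your argument for that lemma and it is correct: writing $u_{m+1}=B-A$ with $0\le A\le P_m$, $0\le B\le Q_m$ gives $u_{m+1}^{+}\le Q_m$ and $u_{m+1}^{-}\le P_m$, hence $\max(P_{m+1},Q_{m+1})\le s_m$ and $s_{m+1}=s_m+|u_{m+1}|\le s_m+s_{m-1}$, and with $s_j=1$, $s_{j+1}\le 2$ this yields $s_m\le F_{m-j+2}$ and $|u_i|\le\max(P_{i-1},Q_{i-1})\le s_{i-2}\le F_{i-j}$ (the case $i=j+1$ being trivial). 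Your side observations --- that lower closed implies meet closed so Lemmas~\ref{gcdlcmlemma} and~\ref{gcdlcmelem} apply, and that no positivity of the $d_i$ is needed --- are also consistent with the paper. So the proposal is correct and takes the same route, with the added value of making the Fibonacci entry bound self-contained rather than imported by citation.
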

\begin{proof}
The first claim of the theorem follows immediately from the Gerschgorin theorem (see, for example, \cite[Theorem~7.2.1]{GolubVanLoan}) by applying Lemma~\ref{gcdlcmelem}. To obtain the upper bound for the $[S]_f$-eigenvalues $\lambda\in\mathbb{C}$ of $(S)_f$, we note that there exists some $k\in\{1,2,\ldots,n\}$ such that we can estimate
\[
|\lambda|\leq \sum_{j=1}^n\left|l_k^{-1}\sum_{\alpha=1}^nE_{k,\alpha}^{-1}\sum_{\gamma=1}^n\frac{f(x_\alpha\wedge x_\gamma)E_{j,\gamma}^{-1}}{f(x_\alpha)f(x_\gamma)}\right|\leq c_f^{-1}m_g^{-1}\sum_{\alpha=1}^n|E_{k,\alpha}^{-1}|\sum_{j=1}^n\sum_{\gamma=1}^n|E_{j,\gamma}^{-1}|,
\]
where
\[
c_f=\min_{1\leq i,j\leq n}|f(x_i\vee x_j)|\ \text{and}\  m_g=\min_{1\leq i\leq n}\bigg|\sum_{\substack{z\preceq x_i\\ z\preceq x_j,\ j<i}}(g_d*\mu)(\hat 0,z)\bigg|.
\]

It has been shown by Alt{\i}n{\i}\c{s}{\i}k et al.~\cite{ihk} that if $X\in K(n)$, then the diagonal elements of $X^{-1}=(Y_{i,j})_{i,j=1}^n$ are $Y_{i,i}=1$ for $i\in\{1,2,\ldots,n\}$ and the off-diagonal entries can be estimated by
\[
|Y_{i,j}|\leq F_{i-j}\quad\text{for }1\leq j<i\leq n\ \text{and}\  Y_{i,j}=0\quad\text{otherwise},
\]
where $(F_i)_{i=1}^\infty$ is the Fibonacci sequence defined by the recursion $F_1=F_2=1$ and $F_{i+2}=F_{i}+F_{i+1}$ for $i\in\mathbb{Z_+}$. In addition, the partial sums of the Fibonacci sequence satisfy the easily verified identity
\[
\sum_{i=1}^nF_i=F_{n+2}-1\quad\text{for }n\in\mathbb{Z_+}.
\]

The previous discussion allows us to carry out the estimations
\[
\sum_{\alpha=1}^n|E_{k,\alpha}^{-1}|\leq 1+\sum_{\alpha=1}^{k-1} F_{k-\alpha}=F_{k+1}\leq F_{n+1}\quad\text{for all }k\in\{1,2,\ldots,n\}
\]
and
\[
\sum_{j=1}^n\sum_{\gamma=1}^n|E_{j,\gamma}^{-1}|\leq\sum_{j=1}^nF_{j+1}=F_{n+3}-2.
\]
Hence
\[
|\lambda|\leq c_f^{-1}m_g^{-1}F_{n+1}(F_{n+3}-2),
\]
which is the desired result.
\end{proof}

In the course of the proof of part (ii) of Theorem~\ref{localbound1}, we have estimated the elements of $E^{-1}$, i.e., the values of the M\"{o}bius function on the subset $S$ of lattice $P$, using a somewhat crude argumentation pertaining to Fibonacci numbers. However, in semilattices where the behavior of the M\"{o}bius function is known a priori, some additional steps can be taken to further improve the bounds. We demonstrate one such method in the following section with respect to the divisor lattice.

\subsection{Eigenvalues of GCD matrices with respect to LCM matrices}
When the properties of the lattice are known, the methodology presented in the previous section permits for the derivation of even tighter bounds. In this section, we consider as an example the eigenvalues of GCD matrices with respect to LCM matrices in the lattice $(P,\preceq)=(\mathbb{Z_+},|)$.

Let $S=\{x_1,x_2,\ldots,x_n\}\subset\mathbb{Z_+}$ be a finite, factor closed set ordered such that $x_i\leq x_j$ whenever $i\leq j$. Let $f$ be a complex-valued, semimultiplicative function defined on $\mathbb{Z_+}$ such that $f(x)\neq 0$ for all $x\in\mathbb{Z_+}$. Let $(S)_f$ denote the GCD matrix on $S$,
\[
((S)_f)_{i,j}=f((x_i,x_j))
\]
and let $[S]_f$ denote the LCM matrix on $S$,
\[
([S]_f)_{i,j}=f([x_i,x_j]).
\]
Let $g(x)=1/f(x)$ for $x\in\mathbb{Z_+}$. We require that the LCM matrix $[S]_f$ is invertible, which is equivalent to the condition
\[
l_i=\sum_{\substack{z|x_i\\ z\not |x_j,\ j<i}}(g*_D\mu)(z)\neq 0,\quad i\in\{1,2,\ldots,n\}.
\]

Let $E$ denote the $n\times n$ matrix defined element-wise by setting
\[
E_{i,j}=\begin{cases}1&\text{if }x_j|x_i,\\ 0&\text{otherwise.}\end{cases}
\]
Let $R={\rm diag}(f(x_1),\ldots,f(x_n))$, $L={\rm diag}(l_1,\ldots,l_n)$, and let $D={\rm diag}(d_1,\ldots,d_n)$ with
\[
d_i=\sum_{\substack{z|x_i\\ z\not |x_j,\ j<i}}(f*_D\mu)(z),\quad i\in\{1,2,\ldots,n\}.
\]
It now follows from Propositions~\ref{EDET} and~\ref{RELETR} that
\[
(S)_f=EDE^\textup{T}\ \text{and}\ [S]_f=RELE^\textup{T}R.
\]

Part (i) of Theorem~\ref{localbound1} implies that
\[
|\lambda|\leq m_g^{-1}c_f^{-1}\sum_{\alpha=1}^n|\mu(x_\alpha,x_i)|\sum_{j=1}^n\sum_{\beta=1}^n|\mu(x_\beta,x_j)|,
\]
where
\[
c_f=\min_{1\leq i,j\leq n}|f([x_i,x_j])|\ \text{and}\ m_g=\min_{1\leq i\leq n}\bigg|\sum_{\substack{z|x_i\\ z\not |x_j,\ j<i}}(g*_D\mu)(z)\bigg|
\]
and $\mu(x,y)=\mu(y/x)$ is the arithmetical M\"{o}bius function for $x|y$, $y\in\mathbb{Z_+}$.

Let $k\in\mathbb{Z_+}$ have the prime decomposition $k=p_1^{a_1}\cdots p_r^{a_r}$. We define the arithmetical functions $\omega(k)=r$ and $\Omega(k)=a_1+\ldots+a_r$. Moreover, let $\tau(k)=\sum_{d|k}1$ be the number-of-divisors function. These functions have the following easily verified properties (for more on these arithmetical functions, see for example~\cite[Section~22.13]{HW})
\[
2^{\omega(k)}\leq \tau(k)\leq 2^{\Omega(k)}\quad\text{for all }k\in \mathbb{Z_+}
\]
and
\[
2^{\omega(k)}=\tau(k)=2^{\Omega(k)}\quad\text{for any squarefree }k\in\mathbb{Z_+}.
\]

We observe that $\mu(d)\neq 0$ for $d|k$ if and only if $|\mu(d)|=1$ if and only if $d=\prod_{\alpha\in I}\alpha$, where $I\subseteq\{p_1,\ldots,p_{\omega(k)}\}$, i.e., $I$ is a subset of the distinct prime factors of $k$. Since $S$ is factor closed, the sum can therefore be related to the cardinality of this power set via
\[
\sum_{\beta=1}^n|\mu(x_\beta,x_j)|\leq\#\{I\mid I\subseteq \{p_1,\ldots,p_{\omega(x_j)}\}\}=2^{\omega(x_j)}\quad\text{for all }j\in\{1,2,\ldots,n\}.
\]
In particular, it holds that
\begin{equation}
\sum_{\beta=1}^n|\mu(x_\beta,x_j)|\leq 2^{\omega(x_j)}\leq 2\sqrt{x_j}\quad \text{for all }j\in\{1,2,\ldots,n\}\label{naivebound}
\end{equation}
and
\[
\sum_{j=1}^n\sum_{\beta=1}^n|\mu(x_\beta,x_j)|\leq \sum_{j=1}^n\tau(x_j)\leq x_n\log x_n+(2\gamma-1)x_n+\mathcal{O}(\sqrt{x_n}),
\]
where $\gamma$ denotes the Euler--Mascheroni constant and the latter inequality follows from the average order of the number-of-divisors function~\cite[Theorem~320]{HW}. However, the bound~\eqref{naivebound} can be improved for $x_n>70$ by replacing the bound~\eqref{naivebound} with the following result of Robin~\cite{Robin}
\[
\omega(k)\leq 1.3841\displaystyle\frac{\log k}{\log\log k}\quad\text{for }k>2,
\]
implying that
\[
\sum_{\beta=1}^n|\mu(x_\beta,x_j)|\leq 2^{\omega(x_j)}\leq x_n^{0.9594/\log\log x_n}\leq 2\sqrt{x_n}\quad\text{for }x_n>70.
\]

The previous discussion yields the following corollary to the local bound of the previous section.
\begin{corollary}
Let $(P,\preceq,\wedge,\vee)=(\mathbb{Z_+},|,{\rm gcd},{\rm lcm})$, $f$ a complex-valued semimultiplicative function on $\mathbb{Z_+}$ such that $f(x)\neq 0$ for all $x\in\mathbb{Z_+}$, and $S=\{x_1,x_2,\ldots,x_n\}\subset\mathbb{Z_+}$ a finite, factor closed set ordered such that $x_i\leq x_j$ whenever $i\leq j$. Let $g(x)=1/f(x)$ for $x\in\mathbb{Z_+}$ and suppose that
\[
\sum_{\substack{z|x_i\\ z\not |x_j,\ j<i}}(g*_D\mu)(x)\neq 0\quad\text{for all }i\in\{1,2,\ldots,n\}.
\]
Then the $[S]_f$-eigenvalues $\lambda\in\mathbb{C}$ solving
\[
(S)_fx=\lambda[S]_fx
\]
for some $x\in\mathbb{C}^n\setminus\{0\}$ satisfy
\[
|\lambda|\leq 2c_f^{-1}m_g^{-1}(x_n^{3/2}\log x_n+(2\gamma-1)x_n^{3/2}+\mathcal{O}(x_n))\quad\text{for all }x_n\in\mathbb{Z_+},
\]
where $\gamma$ denotes the Euler--Mascheroni constant and
\[
c_f=\min_{1\leq i,j\leq n}|f([x_i,x_j])|\ \text{and}\ m_g=\min_{1\leq i\leq n}\bigg|\sum_{\substack{z|x_i\\ z\not |x_j,\ j<i}}(g*_D\mu)(z)\bigg|.
\]
For $x_n>70$, the upper bound can be improved by
\[
|\lambda|\leq c_f^{-1}m_g^{-1}x_n^{0.9594/\log\log x_n}(x_n\log x_n+(2\gamma-1)x_n+\mathcal{O}(\sqrt{x_n})).
\]
\end{corollary}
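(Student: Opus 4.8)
The plan is to deduce the corollary from part~(i) of Theorem~\ref{localbound1}, specialized to the divisor lattice $(\mathbb{Z_+},|,{\rm gcd},{\rm lcm})$, combined with elementary counting of squarefree divisors and the classical average order of the number-of-divisors function. First I would apply part~(i): by the Gerschgorin theorem every $[S]_f$-eigenvalue $\lambda$ of $(S)_f$ lies in a disk centered at some diagonal entry of $L^{-1}A$ with radius equal to the matching off-diagonal absolute row sum, so by Lemma~\ref{gcdlcmelem} there is an index $i$ with
\[
|\lambda|\ \le\ |l_i|^{-1}\sum_{j=1}^n\Bigl|\sum_{\alpha,\beta=1}^n\frac{\mu(x_\alpha,x_i)\mu(x_\beta,x_j)}{f(x_\alpha\vee x_\beta)}\Bigr|.
\]
Replacing $|f(x_\alpha\vee x_\beta)|^{-1}$ by $c_f^{-1}$ and $|l_i|^{-1}$ by $m_g^{-1}$, bringing the absolute values inside, and factoring the resulting double sum reduces the estimate to controlling $\bigl(\sum_{\alpha=1}^n|\mu(x_\alpha,x_i)|\bigr)\bigl(\sum_{j=1}^n\sum_{\beta=1}^n|\mu(x_\beta,x_j)|\bigr)$.

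Next I would estimate the two sums using that $S$ is factor closed. Since $\mu(x_\beta,x_j)=\mu(x_j/x_\beta)$ vanishes unless $x_\beta| x_j$ and $x_j/x_\beta$ is squarefree, the contributing indices $\beta$ correspond to divisors $d| x_j$ with $x_j/d$ squarefree, of which there are exactly $2^{\omega(x_j)}$; hence $\sum_{\beta}|\mu(x_\beta,x_j)|\le 2^{\omega(x_j)}$, and likewise $\sum_{\alpha}|\mu(x_\alpha,x_i)|\le 2^{\omega(x_i)}$. For the single sum I apply the elementary chain $2^{\omega(k)}\le\tau(k)\le 2\sqrt{k}$ to obtain $\sum_{\alpha}|\mu(x_\alpha,x_i)|\le 2\sqrt{x_i}\le 2\sqrt{x_n}$. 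For the double sum I bound $\sum_{j}2^{\omega(x_j)}\le\sum_{j}\tau(x_j)\le\sum_{m=1}^{x_n}\tau(m)$, which is legitimate because $x_1,\ldots,x_n$ are distinct integers in $\{1,\ldots,x_n\}$, and then quote Dirichlet's theorem \cite[Theorem~320]{HW}, $\sum_{m\le x_n}\tau(m)=x_n\log x_n+(2\gamma-1)x_n+\mathcal{O}(\sqrt{x_n})$, with an implied constant that is absolute. Multiplying the three estimates together yields the first claimed bound.

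For $x_n>70$, the only change is to sharpen the factor coming from $\sum_{\alpha}|\mu(x_\alpha,x_i)|$: Robin's inequality $\omega(k)\le 1.3841\,\log k/\log\log k$, valid for $k>2$, gives $2^{\omega(x_i)}\le x_i^{0.9594/\log\log x_i}$, which I then bound by $x_n^{0.9594/\log\log x_n}$; retaining the $\sum_{m\le x_n}\tau(m)$ estimate for the double sum produces the second bound. I expect two points to require care. The first is purely formal: converting the Gerschgorin disks of part~(i) into the clean inequality $|\lambda|\le\sum_{j}|(L^{-1}A)_{i,j}|$ and extracting $c_f^{-1}$ and $m_g^{-1}$ uniformly in $i$. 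The second --- and this is the real subtlety of the refined bound --- is the step $x_i^{0.9594/\log\log x_i}\le x_n^{0.9594/\log\log x_n}$, which relies on $t\mapsto t^{0.9594/\log\log t}$ being increasing, and this holds only for $t>e^{e}$; the finitely many arguments $x_i\le e^{e}$ (including $x_i\in\{1,2\}$, which Robin's estimate excludes) satisfy $\omega(x_i)\le 2$, hence $2^{\omega(x_i)}\le 4\le x_n^{0.9594/\log\log x_n}$ whenever $x_n>70$, so they are dispatched directly. Everything else amounts to routine bookkeeping with the inequalities already displayed in the paragraphs preceding the corollary.
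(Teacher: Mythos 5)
Your proposal follows the paper's own derivation essentially verbatim: the Gerschgorin estimate from part (i) of Theorem~\ref{localbound1}, the bound $\sum_{\beta}|\mu(x_\beta,x_j)|\leq 2^{\omega(x_j)}$ via factor-closedness, the chain $2^{\omega(k)}\leq\tau(k)\leq 2\sqrt{k}$, Dirichlet's average order of $\tau$, and Robin's inequality for the refined bound when $x_n>70$. Your additional remark patching the monotonicity of $t\mapsto t^{0.9594/\log\log t}$ for small arguments addresses a step the paper passes over silently, and your fix (using $\omega(x_j)\leq 2$ for $x_j\leq e^{e}$) is correct.
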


Remarkably, the upper bound can be bounded by a polynomial term corrected by a moderate logarithmic term. This is an improvement over the local bound of Theorem~\ref{localbound1}, which bounds the $[S]_f$-eigenvalues of $(S)_f$ by an exponential term that arises from the product of two Fibonacci numbers.

\section{Eigenvalues of join matrices with respect to meet matrices}\label{lcmgcdsec}
We consider the dual eigenvalue problem of finding $(\lambda,x)\in\mathbb{C}\times(\mathbb{C}^n\setminus\{0\})$ such that
\[
[S]_fx=\lambda(S)_fx,
\]
where $\lambda$ is called the $(S)_f$-eigenvalue and $x$ the $(S)_f$-eigenvector of $[S]_f$.

The proofs of the following results are obtained analogously to those in Section~\ref{gcdlcmsec} and are thus omitted.
\begin{lemma}\label{lcmgcdlemma}
Let $(P,\preceq,\wedge,\vee,\hat 0)$ be a poset and $f$ a complex-valued, semimultiplicative function defined on $P$ such that $f(x)\neq 0$ for all $x\in P$ and define $g(x)=1/f(x)$. Let $S=\{x_1,x_2,\ldots,x_n\}$ be a finite, meet closed subset of $P$ such that $x_i\preceq x_j$ only if $i\leq j$. Let
\[
d_i=\sum_{\substack{z\preceq x_i\\ z\not\preceq x_j,\ j<i}}(f_d*\mu)(\hat 0,z)\neq 0\quad\text{for all }i\in\{1,2,\ldots,n\}.
\]
Then the $(S)_f$-eigenvalues $\lambda$ solving
\[
[S]_fx=\lambda(S)_fx
\]
for some $x\in\mathbb{C}^n\setminus\{0\}$ are precisely the eigenvalues of the system
\[
D^{-1}By=\lambda y,
\]
where $y\in\mathbb{C}^n\setminus\{0\}$, $D={\rm diag}(d_1,\ldots,d_n)$, and $B=E^{-1}[S]_f(E^{-1})^\textup{T}$. The $n\times n$ matrix $E$ is defined as in~\eqref{Emat}.
\end{lemma}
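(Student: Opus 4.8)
The plan is to run the argument of Lemma~\ref{gcdlcmlemma} ``from the other side,'' peeling off the triangular factor of the meet matrix $(S)_f$ instead of that of the join matrix $[S]_f$. First I would apply Proposition~\ref{EDET} to write $(S)_f=EDE^\textup{T}$, where $E$ is the matrix in~\eqref{Emat} and $D={\rm diag}(d_1,\ldots,d_n)$ with $d_i$ exactly the sums appearing in the hypothesis. Since $E\in K(n)$ is nonsingular and the hypothesis $d_i\neq 0$ for all $i$ makes $D$ invertible, $(S)_f$ is invertible; this is the role played by the invertibility of $[S]_f$ (equivalently, the condition on the $l_i$) in the original lemma, only with the roles of the two matrices interchanged. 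Note that, unlike the matrix $A$ of Lemma~\ref{gcdlcmlemma}, the auxiliary matrix $B=E^{-1}[S]_f(E^{-1})^\textup{T}$ is formed directly from $[S]_f$ and needs no factorization of the join matrix.

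Next, let $(\lambda,x)\in\mathbb{C}\times(\mathbb{C}^n\setminus\{0\})$ satisfy $[S]_fx=\lambda(S)_fx$ and perform the change of variables $y=E^\textup{T}x$, which is a bijection of $\mathbb{C}^n\setminus\{0\}$ onto itself because $E$ is invertible. Using $(S)_f=EDE^\textup{T}$, the right-hand side becomes $\lambda EDy$, while the left-hand side is $[S]_f(E^{-1})^\textup{T}y$, so the eigen-equation reads $[S]_f(E^{-1})^\textup{T}y=\lambda EDy$. Left-multiplying by $E^{-1}$ yields $E^{-1}[S]_f(E^{-1})^\textup{T}y=\lambda Dy$, i.e. $By=\lambda Dy$, and since $D$ is invertible this is equivalent to $D^{-1}By=\lambda y$. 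Each step is an equivalence and the substitution is invertible, so the $(S)_f$-eigenvalues of $[S]_f$ are precisely the eigenvalues of $D^{-1}B$, as asserted.

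I do not expect a genuine obstacle: once the correct factor has been identified the proof is a short piece of linear algebra, entirely parallel to the proof of Lemma~\ref{gcdlcmlemma}. The only things demanding a little care are the bookkeeping of transposes and inverses of $E$, and the observation that in the dual problem it is the hypothesis on the $d_i$ --- rather than on the $l_i$ --- that supplies invertibility of the matrix now sitting on the right-hand side. If one later wants the dual of Corollary~\ref{gcdlcmsigns}, one would additionally use that $D^{-1}B$ is similar to $B^{1/2}D^{-1}B^{1/2}$ when $B$ is positive semidefinite and invoke the law of inertia, but this is not needed for the lemma itself.
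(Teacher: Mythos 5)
Your proof is correct and is essentially the argument the paper intends: the paper omits the proof of Lemma~\ref{lcmgcdlemma} as ``analogous'' to that of Lemma~\ref{gcdlcmlemma}, and your dualization --- factoring $(S)_f=EDE^{\textup{T}}$, substituting $y=E^{\textup{T}}x$, and using $d_i\neq 0$ to invert $D$ --- is precisely that analogous argument with the roles of the two matrices interchanged. Your observation that $B$ needs no factorization of $[S]_f$ (so Proposition~\ref{RELETR} is not invoked here) is also accurate.
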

\begin{corollary}
In addition to the assumptions of Lemma~\ref{lcmgcdlemma}, let
\[
l_i=\sum_{\substack{z\preceq x_i\\ z\not\preceq x_j,\ j<i}}(g_d*\mu)(\hat 0,z)\geq 0\quad\text{for all }i\in\{1,2,\ldots,n\}.
\]
Then the $(S)_f$-eigenvalues of $[S]_f$ are real and they can be ordered $(\lambda_i)_{i=1}^n$ (counting multiplicities) such that
\[
{\rm sign}\,\lambda_i={\rm sign}\sum_{\substack{z\preceq x_i\\ z\not\preceq x_j,\ j<i}}(f_d*\mu)(\hat 0,z)\quad\text{for all }i\in\{1,2,\ldots,n\}.
\]
\end{corollary}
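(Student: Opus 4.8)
The plan is to run the dual of the proof of Corollary~\ref{gcdlcmsigns}, using Lemma~\ref{lcmgcdlemma} in place of Lemma~\ref{gcdlcmlemma}. First I would invoke Lemma~\ref{lcmgcdlemma}: since $d_i\neq 0$ for every $i$, the $(S)_f$-eigenvalues of $[S]_f$ are precisely the eigenvalues of $D^{-1}B$, where $D={\rm diag}(d_1,\ldots,d_n)$ and $B=E^{-1}[S]_f(E^{-1})^\textup{T}$. The new hypothesis $l_i\geq 0$ enters through positivity: by Propositions~\ref{EDET} and~\ref{RELETR} one has $[S]_f=RELE^\textup{T}R=(RE)L(RE)^\textup{T}$ with $R$ real diagonal, $E$ real, and $L={\rm diag}(l_1,\ldots,l_n)\succeq 0$, so $[S]_f$ is positive semidefinite, and hence so is its congruence $B=E^{-1}[S]_f(E^{-1})^\textup{T}$. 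In particular the principal square root $B^{1/2}$ exists.

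Next I would argue realness and signs exactly as in Corollary~\ref{gcdlcmsigns}. The similarity
\[
D^{-1}B\sim B^{1/2}D^{-1}B^{1/2}
\]
exhibits the $(S)_f$-eigenvalues of $[S]_f$ as the spectrum of the symmetric matrix $B^{1/2}D^{-1}B^{1/2}$, hence they are real. Moreover $B^{1/2}D^{-1}B^{1/2}$ is a congruence transformation of the diagonal matrix $D^{-1}$, and ${\rm sign}(d_i^{-1})={\rm sign}(d_i)$ for each $i$; by Sylvester's law of inertia~\cite[Theorem~8.1.12]{GolubVanLoan} these two matrices share the same inertia. This yields an ordering $(\lambda_i)_{i=1}^n$ of the $(S)_f$-eigenvalues, counted with multiplicity, with ${\rm sign}\,\lambda_i={\rm sign}\,d_i$; since
\[
d_i=\sum_{\substack{z\preceq x_i\\ z\not\preceq x_j,\ j<i}}(f_d*\mu)(\hat 0,z),
\]
this is exactly the claimed conclusion. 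Note that the whole argument is the mirror image of Section~\ref{gcdlcmsec} with the roles of $D$ and $L$, and of $(S)_f$ and $[S]_f$, interchanged, which is why the authors state that the proof may be omitted.

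The one step needing care --- the same one that is implicit in the proof of Corollary~\ref{gcdlcmsigns} --- is the degenerate case in which some $l_i$ vanishes: then $L$, $[S]_f$ and $B$ are semidefinite but not definite, $B^{1/2}$ is singular, and both the similarity $D^{-1}B\sim B^{1/2}D^{-1}B^{1/2}$ and the invocation of Sylvester's law (which presumes a nonsingular transforming matrix) must be read with the resulting zero $(S)_f$-eigenvalue taken into account. I expect this to be dispatched just as for Corollary~\ref{gcdlcmsigns}: one restricts attention to the range of $B^{1/2}$, where the congruence is genuine and Sylvester's law applies directly, and accounts separately for the zero eigenvalue, whose multiplicity is governed by $\dim\ker B$. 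Apart from this bookkeeping, every remaining estimate is a verbatim transcription of the corresponding step for $(S)_f$ with respect to $[S]_f$.
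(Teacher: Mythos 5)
Your argument is exactly the one the paper intends: it states that the proofs in Section~\ref{lcmgcdsec} are obtained analogously to those in Section~\ref{gcdlcmsec} and omits them, and your proof is precisely the dual of the proof of Corollary~\ref{gcdlcmsigns} with $D$ and $L$ (and $(S)_f$ and $[S]_f$) interchanged, via the similarity $D^{-1}B\sim B^{1/2}D^{-1}B^{1/2}$ and Sylvester's law of inertia. Your additional remark on the degenerate case where some $l_i$ vanishes (so that $B^{1/2}$ is singular) is a fair caveat, but it applies equally to the paper's own proof of Corollary~\ref{gcdlcmsigns} and does not constitute a departure from its approach.
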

\subsection{Global bounds for the $(S)_f$-eigenvalues of $[S]_f$}
\begin{theorem}
Let $(P,\preceq,\wedge,\vee,\hat 0)$ be a poset, $f$ a semimultiplicative function on $P$ such that $f(x)\neq 0$ for all $x\in P$, and let $S=\{x_1,x_2,\ldots,x_n\}$ be a finite, meet closed subset of $P$ such that $x_i\preceq x_j$ only if $i\leq j$. Define $g(x)=1/f(x)$ for $x\in P$ and let
\[
\sum_{\substack{z\preceq x_i\\ z\not\preceq x_j,\ j<i}}(f_d*\mu)(\hat 0,z)\neq 0\quad\text{for all }i\in\{1,2,\ldots,n\}.
\]
Then the $(S)_f$-eigenvalues $\lambda\in\mathbb{C}$ of $[S]_f$ are bounded from above by
\[
|\lambda|\leq\max_{1\leq i\leq n}|f(x_i)|^2\cdot M_gm_f^{-1}C_nc_n^{-1},
\]
where
\[
M_g=\max_{1\leq i\leq n}\bigg|\sum_{\substack{z\preceq x_i\\ z\not\preceq x_j,\ j<i}}(g_d*\mu)(\hat 0,z)\bigg|\ \text{and}\  m_f=\min_{1\leq i\leq n}\bigg|\sum_{\substack{z\preceq x_i\\ z\not\preceq x_j,\ j<i}}(f_d*\mu)(\hat 0,z)\bigg|.
\]
\end{theorem}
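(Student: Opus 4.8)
The plan is to mirror the proof of the lower bound in Theorem~\ref{meetandjoinbound}, since the present statement is precisely the dual assertion obtained by interchanging the roles of $(S)_f$ and $[S]_f$ (equivalently, by replacing the system $(S)_fx=\lambda[S]_fx$ with $[S]_fx=\lambda(S)_fx$). First I would invoke Lemma~\ref{lcmgcdlemma}: under the stated hypothesis $d_i\neq 0$ for all $i$, the matrix $(S)_f=EDE^{\textup{T}}$ is invertible and the $(S)_f$-eigenvalues of $[S]_f$ coincide with the eigenvalues of $D^{-1}B$, where $B=E^{-1}[S]_f(E^{-1})^{\textup{T}}$. Thus it suffices to bound the spectral radius $\rho(D^{-1}B)$.

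Next I would substitute the decomposition $[S]_f=RELE^{\textup{T}}R$ from Proposition~\ref{RELETR} into $B$, giving $D^{-1}B=D^{-1}E^{-1}RELE^{\textup{T}}R(E^{-1})^{\textup{T}}$. Using submultiplicativity of the spectral norm together with the identity $\|MM^{\textup{T}}\|=\|M\|^2$, I would estimate
\[
\rho(D^{-1}B)\leq\|D^{-1}\|\cdot\|R\|^2\cdot\|L\|\cdot\|EE^{\textup{T}}\|\cdot\|(EE^{\textup{T}})^{-1}\|.
\]
The diagonal factors evaluate directly: $\|D^{-1}\|=m_f^{-1}$, $\|R\|^2=\max_{1\leq i\leq n}|f(x_i)|^2$, and $\|L\|=M_g$. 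Since $E\in K(n)$, the structural constants give $\|EE^{\textup{T}}\|\leq C_n$ and $\|(EE^{\textup{T}})^{-1}\|\leq c_n^{-1}$. Assembling these yields exactly $|\lambda|\leq\max_{1\leq i\leq n}|f(x_i)|^2\cdot M_gm_f^{-1}C_nc_n^{-1}$, as claimed.

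I do not anticipate a genuine obstacle here; the argument is a routine transcription of the lower-bound computation in Theorem~\ref{meetandjoinbound} with $f$ and $g$ swapped, and indeed the quantity appearing here is formally identical to the reciprocal of the earlier lower bound, reflecting the fact that the $(S)_f$-eigenvalues of $[S]_f$ are reciprocals of the $[S]_f$-eigenvalues of $(S)_f$. The only point deserving a moment's care is bookkeeping: confirming that the hypothesis $d_i\neq 0$ (rather than $l_i\neq 0$) is the correct nonsingularity condition for $(S)_f$ in this dual setting, and that no assumption on the $l_i$ beyond what is implicitly needed for $[S]_f$ to be well-defined is required for the upper bound. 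Since the paper explicitly states the proofs in Section~\ref{lcmgcdsec} are obtained analogously to those in Section~\ref{gcdlcmsec}, the write-up can legitimately be kept brief or omitted entirely.
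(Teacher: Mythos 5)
Your argument is correct and is exactly the proof the paper intends: the paper omits it, stating only that the results of Section~\ref{lcmgcdsec} follow analogously to Section~\ref{gcdlcmsec}, and your reduction via Lemma~\ref{lcmgcdlemma} to $\rho(D^{-1}B)$ followed by the norm estimates $\|D^{-1}\|=m_f^{-1}$, $\|R\|^2=\max_i|f(x_i)|^2$, $\|L\|=M_g$, $\|EE^{\textup{T}}\|\leq C_n$, $\|(EE^{\textup{T}})^{-1}\|\leq c_n^{-1}$ is precisely that transcription. Your bookkeeping remark is also right: only $d_i\neq 0$ is needed for the upper bound, since $L$ enters through $\|L\|$ rather than $\|L^{-1}\|$.
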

If in addition it holds that
\[
\sum_{\substack{z\preceq x_i\\ z\not\preceq x_j,\ j<i}}(g_d*\mu)(\hat 0,z)\neq 0\quad\text{for all }i\in\{1,2,\ldots,n\},
\]
then the $(S)_f$-eigenvalues $\lambda\in\mathbb{C}\setminus\{0\}$ of $[S]_f$ are bounded from below by
\[
|\lambda|\geq\min_{1\leq i\leq n}|f(x_i)|^2\cdot m_gM_f^{-1}C_n^{-1}c_n,
\]
where
\[
m_g=\min_{1\leq i\leq n}\bigg|\sum_{\substack{z\preceq x_i\\ z\not\preceq x_j,\ j<i}}(g_d*\mu)(\hat 0,z)\bigg|\ \text{and}\  M_f=\max_{1\leq i\leq n}\bigg|\sum_{\substack{z\preceq x_i\\ z\not\preceq x_j,\ j<i}}(f_d*\mu)(\hat 0,z)\bigg|.
\]
\subsection{Local bounds for the $(S)_f$-eigenvalues of $[S]_f$}
\begin{lemma}
In addition to the assumptions of Lemma~\ref{lcmgcdlemma}, let $S$ be lower closed. Then it holds that
\[
(D^{-1}B)_{i,j}=d_i^{-1}\sum_{\alpha,\beta=1}^n\mu(x_\alpha,x_i)\mu(x_\beta,x_j)f(x_\alpha\vee x_\beta)\quad\text{for }i,j\in\{1,2,\ldots,n\}.
\]
\end{lemma}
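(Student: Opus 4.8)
The plan is to mirror the computation carried out in the proof of Lemma~\ref{gcdlcmelem}, but with the simpler middle factor $[S]_f$ in place of the product $(RE)^{-1}EDE^\textup{T}(E^\textup{T}R)^{-1}$. By Lemma~\ref{lcmgcdlemma}, the matrix whose entries we must compute is $D^{-1}B$ with $B=E^{-1}[S]_f(E^{-1})^\textup{T}$ and $D={\rm diag}(d_1,\ldots,d_n)$; since $D$ is diagonal and invertible under the standing hypothesis $d_i\neq 0$, we immediately have $(D^{-1}B)_{i,j}=d_i^{-1}B_{i,j}$, so the task reduces to identifying $B_{i,j}$.

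First I would write open the matrix product defining $B$. Because $[S]_f$ is symmetric with $([S]_f)_{\alpha,\beta}=f(x_\alpha\vee x_\beta)$ and $((E^{-1})^\textup{T})_{\beta,j}=(E^{-1})_{j,\beta}$, one obtains
\[
B_{i,j}=\sum_{\alpha,\beta=1}^n (E^{-1})_{i,\alpha}\,f(x_\alpha\vee x_\beta)\,(E^{-1})_{j,\beta}.
\]
Next I would invoke the same fact used in Lemma~\ref{gcdlcmelem}, namely that for a lower closed set $S$ the matrix $E$ coincides with the restriction of the $\zeta$-matrix of $(P,\preceq)$ to $S$, so that its inverse is given by the M\"{o}bius function of $P$: $(E^{-1})_{i,j}=\mu(x_j,x_i)$ for all $i,j\in\{1,2,\ldots,n\}$. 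Substituting this identity into the displayed expression yields
\[
B_{i,j}=\sum_{\alpha,\beta=1}^n \mu(x_\alpha,x_i)\,\mu(x_\beta,x_j)\,f(x_\alpha\vee x_\beta),
\]
and multiplying through by $d_i^{-1}$ gives the claimed formula for $(D^{-1}B)_{i,j}$.

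The only point that requires genuine care is the justification of $(E^{-1})_{i,j}=\mu(x_j,x_i)$, which is exactly where lower-closedness of $S$ enters: it guarantees that for every pair $x_j\preceq x_i$ the whole interval $\{z\in P\mid x_j\preceq z\preceq x_i\}$ lies in $S$, so that the M\"{o}bius function of the induced subposet $(S,\preceq)$ agrees with that of $P$ on all relevant intervals. Since this step is already established in the proof of Lemma~\ref{gcdlcmelem}, it can simply be cited; no new estimates or structural results are needed, and, unlike the meet-matrix case, there is no need to invoke the semimultiplicative property in this particular computation, since the values $f(x_\alpha\vee x_\beta)$ appear directly in $[S]_f$ rather than arising from a meet-to-join conversion. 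The argument is therefore essentially a bookkeeping exercise in expanding a triple matrix product.
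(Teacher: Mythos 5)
Your proposal is correct and is essentially the computation the paper has in mind: the paper omits this proof as being "analogous" to that of Lemma~\ref{gcdlcmelem}, and your argument is exactly that analogue — expand $B=E^{-1}[S]_f(E^{-1})^\textup{T}$ entrywise and use $(E^{-1})_{i,j}=\mu(x_j,x_i)$, which is where lower closedness enters. Your observation that the dual case is in fact simpler (the entries $f(x_\alpha\vee x_\beta)$ appear directly, so no appeal to semimultiplicativity is needed here) is accurate.
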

\begin{theorem}
Let $(P,\preceq,\wedge,\vee,\hat 0)$ be a poset, $f$ a semimultiplicative function such that $f(x)\neq 0$ for all $x\in P$, and let $S=\{x_1,x_2,\ldots,x_n\}$ be a finite, lower closed subset of $P$ such that $x_j\preceq x_i$ only if $i\leq j$. Define $g(x)=1/f(x)$ for $x\in P$ and let
\[
\sum_{\substack{z\preceq x_i\\ z\not\preceq x_j,\ j<i}}(f_d*\mu)(\hat 0,z)\neq 0\quad\text{for all }x\in\{1,2,\ldots,n\}.
\]

(i) Each $(S)_f$-eigenvalue $\lambda\in\mathbb{C}$ of $[S]_f$ lies in one of the disks
\[
\bigg\{z\in\mathbb{C}: \bigg|z-d_i^{-1}\sum_{\alpha,\beta=1}^n\mu(x_\alpha,x_i)\mu(x_\beta,x_i)f(x_\alpha\vee x_\beta)\bigg|\leq |d_i|^{-1}\sum_{\substack{1\leq j\leq n\\ j\neq i}}\bigg|\sum_{\alpha,\beta=1}^n\mu(x_\alpha,x_i)\mu(x_\beta,x_j)f(x_\alpha\vee x_\beta)\bigg|\bigg\},
\]
$i\in\{1,2,\ldots,n\}$.

(ii) The $(S)_f$-eigenvalues $\lambda\in\mathbb{C}$ of $[S]_f$ can be bounded from above by
\[
|\lambda|\leq C_fm_f^{-1}F_{n+1}(F_{n+3}-2),
\]
where
\[
C_f=\max_{1\leq i,j\leq n}|f(x_i\vee x_j)|\ \text{and}\  m_f=\min_{1\leq i\leq n}\bigg|\sum_{\substack{z\preceq x_i\\ z\preceq x_j,\ j<i}}(f_d*\mu)(\hat 0,z)\bigg|
\]
and $(F_i)_{i=1}^\infty$ denotes the Fibonacci sequence defined by the recursion $F_1=F_2=1$ and $F_{i+2}=F_i+F_{i+1}$ for $i\in\mathbb{Z_+}$.
\end{theorem}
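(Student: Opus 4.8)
The plan is to mirror the proof of Theorem~\ref{localbound1} with the roles of $(S)_f$ and $[S]_f$ interchanged. The starting point is Lemma~\ref{lcmgcdlemma}, which reduces the $(S)_f$-eigenvalues of $[S]_f$ to the ordinary eigenvalues of $D^{-1}B$, together with the immediately preceding lemma of this section, which gives the explicit entry formula $(D^{-1}B)_{i,j}=d_i^{-1}\sum_{\alpha,\beta=1}^n\mu(x_\alpha,x_i)\mu(x_\beta,x_j)f(x_\alpha\vee x_\beta)$ when $S$ is lower closed. Part~(i) then follows at once by applying the Gerschgorin disk theorem (as cited, \cite[Theorem~7.2.1]{GolubVanLoan}) to this matrix: the $i$th disk is centered at the $i$th diagonal entry and has radius equal to the sum of absolute values of the off-diagonal entries in row $i$, which is precisely the statement displayed in~(i).

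For part~(ii) I would estimate the spectral radius of $D^{-1}B$ directly. From~(i) there exists some index $k$ with $|\lambda|\le |d_k|^{-1}\sum_{j=1}^n\big|\sum_{\alpha,\beta=1}^n\mu(x_\alpha,x_k)\mu(x_\beta,x_j)f(x_\alpha\vee x_\beta)\big|$. I bound $|d_k|^{-1}\le m_f^{-1}$, pull $f(x_\alpha\vee x_\beta)$ out in absolute value by $C_f=\max_{i,j}|f(x_i\vee x_j)|$, and use the triangle inequality to obtain $|\lambda|\le C_f m_f^{-1}\big(\sum_{\alpha=1}^n|\mu(x_\alpha,x_k)|\big)\big(\sum_{j=1}^n\sum_{\beta=1}^n|\mu(x_\beta,x_j)|\big)$. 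Recognizing that $\mu(x_\beta,x_j)=E^{-1}_{j,\beta}$ and that $E\in K(n)$, I invoke the Alt{\i}n{\i}\c{s}{\i}k et al.\ estimate $|E^{-1}_{i,j}|\le F_{i-j}$ for $i>j$ (with $1$ on the diagonal, $0$ above) exactly as in the proof of Theorem~\ref{localbound1}. The partial-sum identity $\sum_{i=1}^n F_i=F_{n+2}-1$ then yields $\sum_{\alpha=1}^n|\mu(x_\alpha,x_k)|\le 1+\sum_{\alpha=1}^{k-1}F_{k-\alpha}=F_{k+1}\le F_{n+1}$ and $\sum_{j=1}^n\sum_{\beta=1}^n|\mu(x_\beta,x_j)|\le\sum_{j=1}^n F_{j+1}=F_{n+3}-2$, giving $|\lambda|\le C_f m_f^{-1}F_{n+1}(F_{n+3}-2)$.

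Since every ingredient already appears in Section~\ref{gcdlcmsec} — the decomposition lemmas, the entry formula, the Gerschgorin bound, and the Fibonacci estimates on $E^{-1}$ — there is no genuine obstacle here; the proof is obtained \emph{mutatis mutandis} from that of Theorem~\ref{localbound1}, which is exactly why the paper states it can be omitted. The only point requiring a modicum of care is bookkeeping: in the dual problem $[S]_f$ sits in the numerator of $D^{-1}B$, so the relevant quantity is $C_f=\max|f(x_i\vee x_j)|$ (rather than $c_f=\min|f(x_i\vee x_j)|$) and the normalizing scalar is $d_i^{-1}$, controlled by $m_f^{-1}$ with $m_f=\min_i|\sum_{z\preceq x_i,\,z\not\preceq x_j\,(j<i)}(f_d*\mu)(\hat 0,z)|$, in contrast to the $m_g^{-1}$ appearing in Theorem~\ref{localbound1}. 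Once these substitutions are tracked consistently, the estimates close exactly as above.
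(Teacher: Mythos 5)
Your proposal is correct and is exactly the argument the paper intends: the paper omits this proof precisely because it is obtained \emph{mutatis mutandis} from that of Theorem~\ref{localbound1}, using Lemma~\ref{lcmgcdlemma}, the entry formula for $D^{-1}B$, the Gerschgorin theorem, and the Fibonacci estimates on $E^{-1}$, with $C_f$ and $m_f^{-1}$ replacing $c_f^{-1}$ and $m_g^{-1}$ as you note. No gaps.
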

\section{Numerical experiments}\label{experiments}
In this section, we consider the generalized spectra for two selected geometrical lattices and compare their generalized eigenvalues to the local bounds obtained in Section~\ref{gcdlcmsec}.
\begin{example}
We consider the $[S]_f$-eigenvalues of $(S)_f$, when $S=\{1,2,...,8\}$ with the induced partial ordering $\preceq$ and embedded function $f$ determined by the Hasse diagram on the left-hand side of Figure~\ref{hasse1}. The matrices $(S)_f=EDE^\textup{T}$ and $[S]_f=RELE^\textup{T}R$ can now be constructed by their factors: the induced partial order $\preceq$ produces the matrix
\[
E=\left[\begin{array}{cccccccc}
1&0&0&0&0&0&0&0\\ 
1&1&0&0&0&0&0&0\\ 
1&0&1&0&0&0&0&0\\ 
1&1&1&1&0&0&0&0\\ 
1&1&1&1&1&0&0&0\\ 
1&1&1&1&1&1&0&0\\  
1&1&1&1&1&0&1&0\\ 
1&1&1&1&1&1&1&1
\end{array}\right]
\]
and the endowed restricted incidence function $f$ yields
\[
R={\rm diag}(1,2,3,6,4,5,8,10),\ D={\rm diag}(1,1,2,2,-2,1,4,1),\ \text{and}\ L={\rm diag}\left(1,-\frac{1}{2},-\frac{2}{3},\frac{1}{3},\frac{1}{12},-\frac{1}{20},-\frac{1}{8},\frac{1}{40}\right).
\] 
Theorem~\ref{localbound1} implies that the Gerschgorin disks have the midpoints
\[
(w_1,w_2,w_3,w_4,w_5,w_6,w_7,w_8)=(0,0,0,-1,-1,0,0,0)
\] 
and the radii
\[
(r_1,r_2,r_3,r_4,r_5,r_6,r_7,r_8)=\left(5,\frac{1}{2},\frac{1}{3},\frac{5}{6},2,\frac{4}{5},\frac{1}{2},\frac{8}{5}\right).
\]

The eigenvalues and associated Gerschgorin disks $\{z\in\mathbb{C}:|z-w_i|\leq r_i\}$, $i\in\{1,2,\ldots,8\}$, of Theorem~\ref{localbound1} are displayed on the right-hand side of Figure~\ref{hasse1}. We observe that the Gerschgorin disks are clustered near the origin with reasonably scaled radii.
\begin{figure}[!h]
\centering
\includegraphics{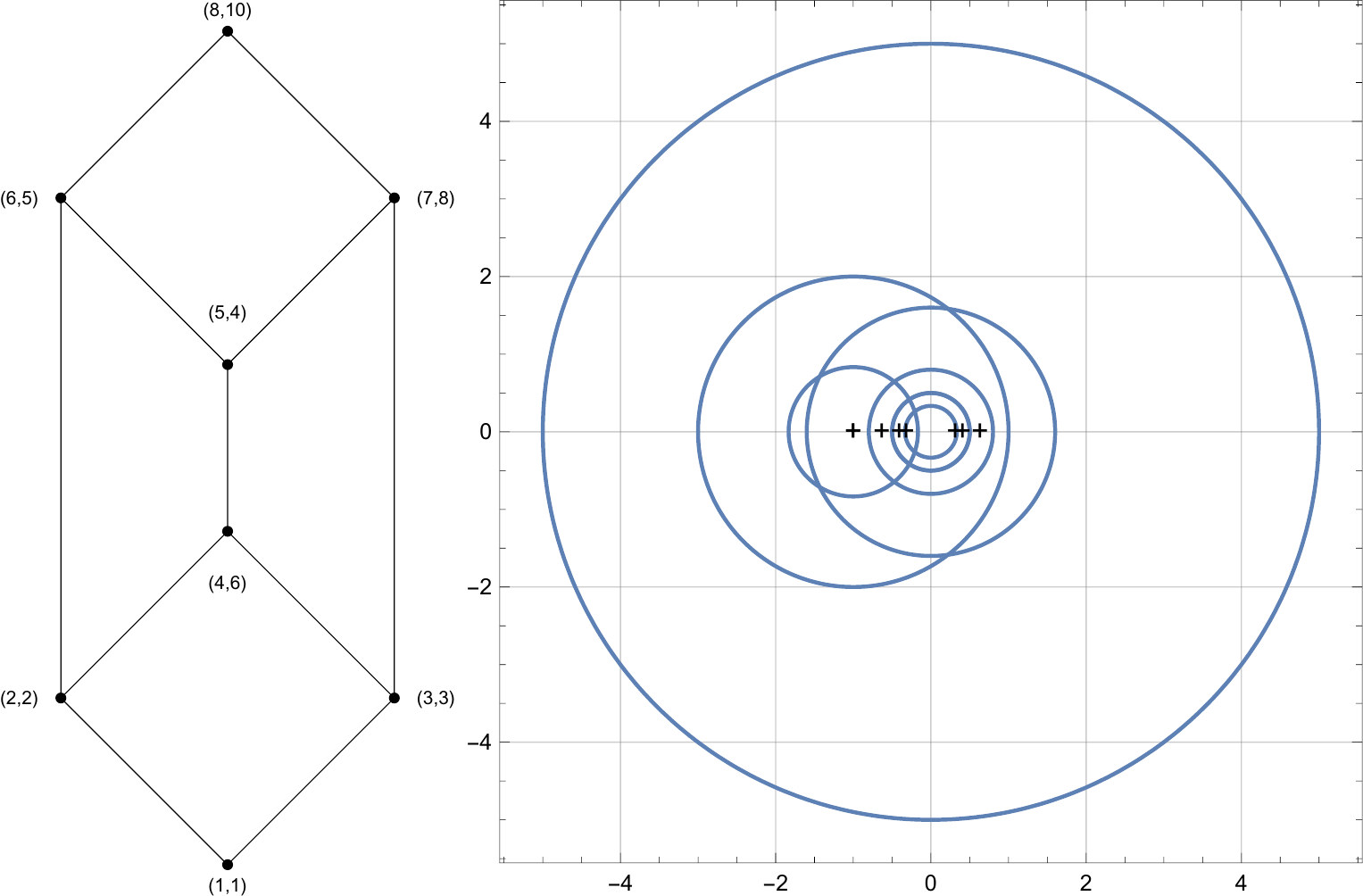}
\caption{Left: Hasse diagram of pairs $(x,f(x))\in S\times f(S)$ and the induced partial ordering $\preceq$, where elements that are connected by a line segment are ordered by $\prec$ from bottom to top. Right: The Gerschgorin disks of Theorem~\ref{localbound1} and the computed $[S]_f$-eigenvalues of $(S)_f$.}\label{hasse1}
\end{figure}
\end{example}

\begin{example}
We consider the $[S]_f$-eigenvalues of $(S)_f$, when $S=\{1,2,\ldots,12\}$ with the induced partial ordering $\preceq$ and embedded function $f$ determined by the Hasse diagram on the left-hand side of Figure~\ref{hasse2}. The matrices $(S)_f=EDE^\textup{T}$ and $[S]_f=RELE^\textup{T}R$ can be constructed factor-wise: the partial ordering $\preceq$ now produces the matrix
\[
E=\left[\begin{array}{cccccccccccc}
1&0&0&0&0&0&0&0&0&0&0&0\\
1&1&0&0&0&0&0&0&0&0&0&0\\
1&0&1&0&0&0&0&0&0&0&0&0\\
1&1&1&1&0&0&0&0&0&0&0&0\\
1&1&1&1&1&0&0&0&0&0&0&0\\
1&1&1&1&0&1&0&0&0&0&0&0\\
1&1&1&1&1&1&1&0&0&0&0&0\\
1&1&1&1&1&1&0&1&0&0&0&0\\
1&1&1&1&1&1&1&1&1&0&0&0\\
1&1&1&1&1&1&1&1&1&1&0&0\\
1&1&1&1&1&1&1&1&1&0&1&0\\
1&1&1&1&1&1&1&1&1&1&1&1
\end{array}\right]
\]
and the endowed restricted incidence function $f$ yields
\begin{align*}
&R={\rm diag}(1,2,4,8,4,6,6,4,3,5,9,45),\ D={\rm diag}(1,1,3,3,-4,-2,4,2,-5,2,6,34),\\
\text{and}\ &L={\rm diag}\left(1,-\frac{1}{2},-\frac{3}{4},\frac{3}{8},\frac{1}{8},\frac{1}{24},-\frac{1}{8},-\frac{1}{24},\frac{5}{24},-\frac{2}{15},-\frac{2}{9},\frac{2}{45}\right)
\end{align*}
Theorem~\ref{localbound1} implies that the Gerschgorin disks have the midpoints
\[
(w_1,w_2,w_3,w_4,w_5,w_6,w_7,w_8,w_9,w_{10},w_{11},w_{12})=\left(0,0,0,-1,-\frac{32}{15},-\frac{32}{15},-\frac{6}{5},-\frac{6}{5},-1,1,1,0\right)
\] 
and the radii
\[
(r_1,r_2,r_3,r_4,r_5,r_6,r_7,r_8,r_9,r_{10},r_{11},r_{12})=\left(9,\frac{1}{2},\frac{1}{4},
\frac{25}{12},\frac{38}{15},\frac{44}{45},\frac{14}{45},\frac{16}{5},8,\frac{12}{5},\frac{8}{9},\frac{52}{45}\right).
\]

The eigenvalues and associated Gerschgorin disks $\{z\in\mathbb{C}:|z-w_i|\leq r_i\}$, $i\in\{1,2,\ldots,12\}$, of Theorem~\ref{localbound1} are displayed on the right-hand side of Figure~\ref{hasse2}. The Gerschgorin disks are clustered near the origin with reasonably small radii.

\begin{figure}[!h]
\centering
\includegraphics{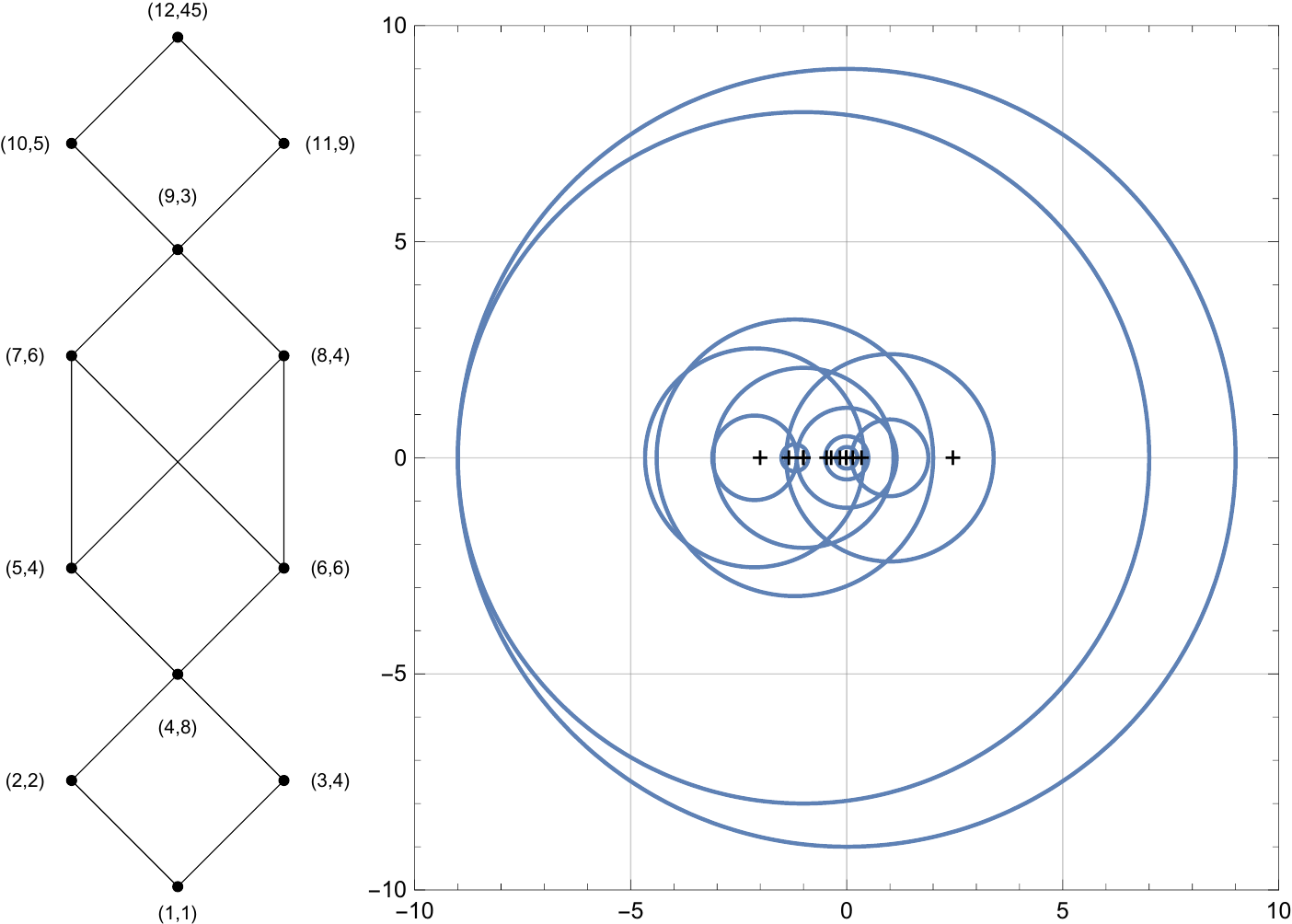}
\caption{Left: Hasse diagram of pairs $(x,f(x))\in S\times f(S)$ and the induced partial ordering $\preceq$, where elements that are connected by a line segment are ordered $\prec$ from bottom to top. Right: The Gerschgorin disks of Theorem~\ref{localbound1} and the computed $[S]_f$-eigenvalues of $(S)_f$.}\label{hasse2}
\end{figure}
\end{example}
\section{Conclusions and future prospects}\label{conclusions}
We have conducted the first study of the properties of the generalized eigenvalues of meet and join matrices on meet closed semilattices. Novel bounds for the smallest and largest eigenvalues were derived in several distinct cases: we obtained global bounds that uniformly govern the behavior of the generalized eigenvalues of a large class of meet and join matrices. We have also proposed new local bounds, which utilize information that can be discerned directly from a given lattice's properties to obtain improved bounds. We have demonstrated that our results are applicable in both the case of eigenvalues of meet matrices with respect to join matrices and vice versa. As a case study for demonstrating how the usage of local eigenvalue bounds improves the global eigenvalue bounds, we have considered the select example of the eigenvalues of GCD matrices with respect to LCM matrices. The dominant eigenvalues in this case can be shown to increase at a polynomial rate, which is an immediate improvement over the global bound that is known to increase exponentially. Also, two geometrical lattices were selected to demonstrate the usage of the local bounds numerically and the eigenvalues were found to agree well with the theoretical bounds proposed in this work.

Semilattices in which the behavior of the associated M\"{o}bius function is known a priori appear to be most inviting in view of our proposed approach to obtain sharper bounds on generalized eigenvalues --- such as the Boolean algebra or the unitary divisor lattice embedded with the greatest common unitary divisor and the least common unitary multiple. We are confident that the methods proposed in this paper can be utilized successfully in these aforementioned cases as well as other similar problems.

\section*{Acknowledgement}

The authors wish to thank the anonymous referee for his/her insightful comments and suggestions which helped to improve this paper greatly.

\section*{References}

\bibliography{IK}

\end{document}